\newcommand\C{\mathbb C}
\newcommand\R{\mathbb R}
\newcommand\Z{\mathbb Z}
\newcommand\N{\mathbb N}
\newcommand\B{\mathcal B}
\newcommand\Lp{{\mathbf L}^p(T,\lambda)}
\newcommand\Lq{{\mathbf L}^q(T,\lambda)}
\newcommand\Lqmu{{\mathbf L}^q(T,\mu)}
\newcommand\Lone{{\mathbf L}^1(T,\lambda)}
\renewcommand\root{\mathfrak{root}}
\newcommand\parent{\operatorname{par}}
\newcommand\child{\operatorname{Chi}}
\newcommand\dist{\operatorname{dist}}
\newcommand\ds{\displaystyle}
\theoremstyle{plain}
\newtheorem{theorem}{Theorem}[section]
\newtheorem{proposition}[theorem]{Proposition}
\newtheorem{lemma}[theorem]{Lemma}
\newtheorem{corollary}[theorem]{Corollary}
\newtheorem{example}[theorem]{Example}
\theoremstyle{definition}
\newtheorem{defi}[theorem]{Definition}
\title[Hypercyclic Shifts on Trees]{Hypercyclicity of Shifts on
  Weighted \\ ${\mathbf L}^p$  Spaces of Directed Trees}
\author{Rub\'en A. Mart\'inez-Avenda\~no}
\thanks{The author would like to thank the Department of Mathematical
  Sciences at George Mason University for their hospitality while he
  undertook this research. In particular, he would like to thank
  Prof. Flavia Colonna for many illuminating conversations about
  operators defined on infinite trees and for his comments on this
  paper. This research was made possible by the ``Programa de
  Estancias Sab\'aticas en el Extranjero 2015'' of the Consejo
  Nacional de Ciencia y Tecnolog\'ia, M\'exico.
\\
{\tt https://doi.org/10.1016/j.jmaa.2016.08.066}
\\
\copyright 2016. This manuscript version is made available under the CC-BY-NC-ND 4.0
license {\tt http://creativecommons.org/licenses/by-nc-nd/4.0/} 
}
\address{Centro de Investigaci\'on en Matem\'aticas, Universidad
  Aut\'onoma del Estado de Hidalgo, Pachuca, Hidalgo, Mexico.}
\email{rubeno71@gmail.com}
\subjclass[2010]{47A16, 47B37, 05C05, 05C63}
\begin{document}

\begin{abstract}
In this paper, we study the hypercyclicity of forward and backward
shifts on weighted $L^p$ spaces of a directed tree. In the forward
case, only the trivial trees may support hypercyclic shifts,
in which case the classical results of Salas~\cite{salas} apply. For
the backward case, nontrivial trees may support
hypercyclic shifts. We obtain necessary conditions and sufficient
conditions for hypercyclicity of the backward shift and, in the case
of a rooted tree on an unweighted space, we show that these conditions
coincide.
\end{abstract}

\maketitle

\centerline{\it In memory of Jaime Cruz Sampedro, mathematician,
  teacher, colleague, and friend.}

\vskip1cm

\section{Introduction}

A bounded operator on a Banach space is called hypercyclic if there
exists a vector such that its orbit under the operator
is dense in the space. The study of hypercyclicity (in other types of spaces)
can be traced back to the first half of the 20th century, to the
papers of Birkhoff~\cite{birkhoff} and MacLane~\cite{maclane}. The
first example of a hypercyclic operator on a Banach space was given by
Rolewicz~\cite{rolewicz} in 1969, but it was not until the last two
decades of the 20th century that the study of hypercyclicity really
took off. Instead of giving here the detailed history of the advances in
hypercyclicity in the past 35 years, we refer the reader to the
excellent books by Grosse-Erdmann and Peris~\cite{GEP} and Bayart and
Matheron~\cite{BaMa}, where the reader can find more information about
this concept and its importance.

One large source of examples and counterexamples in the study of
bounded operators is the class of weighted shifts. The study of
weighted shifts was initiated in the now classical paper of
Shields~\cite{shields} and continued by many authors. The
characterization of the hypercyclicity of weighted shifts is due to
Salas~\cite{salas} (see the books \cite{BaMa} and \cite{GEP} for an
alternative statement of this characterization). Many other classes of
operators on Banach spaces have been shown to be hypercyclic, under
certain conditions. Two other famous families of operators on Hilbert spaces
that contain hypercyclic operators, are adjoints of multiplication
operators (see \cite{GoSh}) and composition
operators on spaces of holomorphic functions (see, e.g.,
\cite{shapiro}).

The interest of the study of operators on infinite trees is motivated
mainly by the research in harmonic analysis dealing with the Laplace
operator on discrete structures, perhaps initiated in the
papers~\cite{cartier1,cartier2}. In particular, infinite trees can be
seen as the natural discretizations of the hyperbolic disk. Much more
information about these topics can be found in the
papers~\cite{ACE1,ACE2, ACE3, ACE4, CoCo, CoEa1, CoEa2, pavone}. We
should mention that the paper \cite{pavone} studies hypercyclicity for
composition operators defined on the boundary of nondirected trees.

In \cite{JJS}, Jab\l o\'nski, Jung and Stochel initiated the study of
weighted shifts on Hilbert spaces of functions defined on infinite
directed trees. In their paper, they study many operator-theoretic
properties of these operators, such as boundedness, hyponormality,
subnormality and spectral properties. 

Motivated by the work in \cite{JJS}, in this paper we study the
hypercyclicity of shifts on directed trees on the weighted ${\mathbf
  L}^p$ space of a directed tree. In concrete, we show in
Section~\ref{sec_shift} that ``forward'' shifts are never hypercyclic
unless they reduce to the classical cases, in which the
characterization by Salas mentioned above can be applied.  In
Section~\ref{sec_adjoint}, we find a concrete form for the adjoint of
the shift and define what we mean by a ``backward shift''. More
interestingly, we show in Sections \ref{sec_root} and \ref{sec_unroot}
that this backward shift on weighted directed trees, may be
hypercyclic if some conditions are satisfied. In concrete, the main
results of this paper provide necessary conditions and sufficient
conditions for hypercyclicity of the backward shift in the case where
the tree has a root. These two conditions coincide when the space is
unweighted, in which case the hypercyclicity of the operator depends
on a simple property of the tree, that of having no ``free ends''. In
the case of the unrooted tree, we only give necessary conditions and
show an example when these conditions are satisfied. When applied to
the classical backward shifts, all of these conditions reduce to the
ones obtained by Salas.

Before we begin, we should mention that in \cite{JJS}, the authors
study the weighted shift on an unweighted ${\mathbf L}^2$ space of the
tree. In this paper we prefer to concentrate on the unweighted shift
on the weighted spaces ${\mathbf L}^p$ of the tree. The reason for
this is that the results are cleaner in the case of the weighted
space, as is also the case of the classical shifts. Results for the
hypercyclicity of the weighted shift can be obtained by similarity
with the shift of the weighted space, as it is done in, for example,
\cite{BaMa,GEP}. We leave these results as an exercise for the
interested reader.

\section{Definitions and Notation}

In this section, we set the basic definitions and notation needed for
the rest of the paper. We denote by $\N$, $\N_0$, $\Z$, $\R$, $\R_+$ and $\C$
the sets of natural numbers, the nonnegative integers, the integers,
the real numbers, the positive real numbers, and the complex numbers,
respectively. 

\subsection*{Hypercyclicity}
We first state the main definition in this paper and a few comments
about it. After that, we present the main tool used to prove that
operators are hypercyclic.

\begin{defi}
Let $\B$ be a Banach space and $S: \B \to \B$ a bounded operator. We
say that $S$ is hypercyclic if there exists a vector $x\in \B$ such
that
$$
\{ S^n x \, : \, n \in \N_0 \}
$$
is dense in $\B$. The (necessarily) nonzero vector $x$ is called a
hypercyclic vector.
\end{defi}
Observe that if the operator $S: \B \to \B$ is hypercyclic, the Banach
space $\B$ is necessarily separable. Observe also that, if $x$ is a
hypercyclic vector for $S$, then, for each $n\in\N$, the vector $S^n
x$ is also a hypercyclic vector and hence the hypercyclic vectors form
a dense set (in fact, as is well-known, they form a dense
$G_\delta$-subset of $\B$).

One does not need to explicitly find a vector $x$ that satisfies the
definition above to show that an operator $S:\B\to \B$ is
hypercyclic. The following theorem gives an extremely useful
sufficient condition for hypercyclicity.

\begin{theorem}[Hypercyclicity Criterion]\label{the_hc}
Let $\B$ be a separable Banach space and $S:\B \to \B$ a bounded
operator. Assume there exists a dense subset $X \subseteq \B$, an
increasing sequence of natural numbers $\{ n_k \}$, and a sequence of
functions $T_{n_k}: X \to \B$ such that
\begin{enumerate}
\item $S^{n_k} x \to 0$ for each $x \in X$,
\item $T_{n_k} x \to 0$ for each $x \in X$, and
\item $S^{n_k} T_{n_k} x \to x$ for each $x \in X$.
\end{enumerate}
Then $S$ is a hypercyclic operator.
\end{theorem}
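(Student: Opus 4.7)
The plan is to reduce the criterion to Birkhoff's transitivity theorem, which asserts that on a separable Banach space $\B$, a bounded operator $S$ is hypercyclic if and only if it is topologically transitive, i.e., for every pair of nonempty open sets $U,V\subseteq\B$ there exists $n\in\N_0$ with $S^n(U)\cap V\neq\emptyset$. I would either invoke this equivalence directly or reprove it via Baire category: take a countable base $\{V_j\}_{j\in\N}$ of nonempty open sets of $\B$; transitivity implies that each $E_j=\bigcup_{n\in\N_0} S^{-n}(V_j)$ is open and dense, so by the Baire Category Theorem $\bigcap_j E_j$ is a dense $G_\delta$, and every point in it is hypercyclic.

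Granted this equivalence, the real content is verifying transitivity from hypotheses (1)--(3). Fix nonempty open $U,V\subseteq\B$. By density of $X$, choose $x\in X\cap U$ and $y\in X\cap V$. Note that although the $T_{n_k}$ need not be linear or defined on all of $\B$, we only ever apply them to elements of $X$, which is why $y$ is chosen inside $X$. Consider the perturbation
$$z_k := x + T_{n_k} y \in \B.$$
By hypothesis (2), $T_{n_k}y\to 0$, so $z_k\to x$; since $x\in U$ and $U$ is open, $z_k\in U$ for all sufficiently large $k$. Using linearity of the bounded operator $S^{n_k}$,
$$S^{n_k} z_k = S^{n_k} x \;+\; S^{n_k} T_{n_k} y,$$
and hypotheses (1) and (3) yield $S^{n_k}z_k\to 0+y=y\in V$, so $S^{n_k}z_k\in V$ for all sufficiently large $k$ as well. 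Choosing any $k$ large enough for both inclusions produces $z_k\in U$ with $S^{n_k}z_k\in V$, proving $S^{n_k}(U)\cap V\neq\emptyset$.

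The only nontrivial ingredient is the Baire category step underlying Birkhoff's theorem, and this relies on $\B$ being a complete metric space together with the separability assumption implicit in the existence of a countable base. The direct verification from the three hypotheses is essentially a two-line calculation built around the single vector $z_k=x+T_{n_k}y$: $x$ controls the $U$-side via (2), and the term $T_{n_k}y$, which is invisible in the limit defining $z_k$, is precisely what (3) activates once $S^{n_k}$ is applied, while (1) discards the unwanted contribution from $x$.
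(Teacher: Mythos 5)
Your proof is correct. The paper does not actually prove this theorem itself --- it explicitly defers to \cite[Theorem 1.6]{BaMa} and \cite[Theorem 3.12]{GEP} --- and your argument (reduce to Birkhoff's transitivity theorem via Baire category, then verify transitivity with the single test vector $z_k = x + T_{n_k}y$, using (2) to land in $U$ and (1), (3) to land in $V$ after applying $S^{n_k}$) is precisely the standard proof given in those references.
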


There are several versions of this criterion, which is also referred to
as the Kitai--Gethner--Shapiro Criterion (for the history of this
criterion and its importance in the development of the field, the
reader is referred to the {\em Sources and Comments} section of Chapter 3
in \cite{GEP}).  A proof of the version presented here can be found in
\cite[Theorem 1.6]{BaMa} and \cite[Theorem 3.12]{GEP}.

\subsection*{Trees}
We now state the relevant definitions and notations for directed trees
and directed graphs introduced in \cite{JJS}, which we will use in
this paper (we assume the reader is familiar with the basic definition
of a graph). A {\em directed graph} $G=(V,E)$ is a pair consisting of
a countably-infinite set $V$ (called the set of {\em vertices} of $G$)
and a subset $E$ of $V \times V$ (called the set of {\em directed
  edges}). The {\em indegree} of a vertex $v$ is the cardinality of
the set $\{ u \in V \; : \,  (u,v) \in E \}$ and the {\em outdegree}
of $v$ is the cardinality of the set  $\{ u \in V \; : \,  (v,u) \in E
\}$.  We only consider graphs $G=(V,E)$ which are {\em locally finite};
i.e., both the indegree and the outdegree of each vertex are finite.

A directed graph has a {\em directed circuit} if there exist $n \in
\N$, $n \geq 2$, and a set of distinct vertices
$$
\{ u_1, u_2, \dots, u_n \},
$$
such that $( u_j, u_{j+1} ) \in E$ for each $j=1, 2, \dots, n-1$, and
$(u_n,u_1)\in E$. Furthermore, we also say that the directed graph has a directed circuit if there exists $v \in V$ with $(v,v) \in E$.

The {\em underlying graph} of a directed graph $G=(V,E)$ is the graph
$\tilde{G}=(V,\tilde{E})$ with vertex set $V$ and (undirected) edge
set $\tilde{E}:=\{ \{u,v\} \, : \, (u,v) \in E\}$. Recall that, given
distinct vertices $v$ and $w$ in a graph $\tilde{G}$, a {\em path} is
a finite set of distinct vertices 
$$
\{ v=u_1, \, u_2, \,  u_3, \, \dots, \, u_n=w \}
$$ 
such that $\{ u_j, u_{j+1} \} \in \tilde{E}$ for each $j=1, 2, \dots, n-1$; we say such a path has {\em length} $n$. A graph $\tilde{G}$ is
{\em connected} if for each pair of vertices $v$ and $w$ there exists
a path between $v$ and $w$. We denote by $\dist(v,w)$  the length of the shortest path in the
undirected graph joining $v$ and $w$ and by $\dist(v,H)$ the infimum
of the distances of $v$ to the nonempty set of vertices $H$.

We can now say what we mean by a directed tree.

\begin{defi}
A directed graph $T=(V,E)$ is a {\em directed tree} if 
\begin{itemize}
\item it has no directed circuits,
\item the underlying graph of $T$ is connected, and
\item the indegree of every vertex is either zero or one.
\end{itemize}
If $T=(V,E)$ is a directed tree, we say that $v \in V$ is a {\em root}
if its indegree is zero. A vertex $v \in V$ is called a {\em leaf} if
its outdegree is zero.
\end{defi}

It can be easily seen \cite{JJS} that if a directed tree has a root, then the
root is unique. If there is a root, we call the tree {\em rooted} and we
denote the root by the symbol $\root$. If there is no root, we say the
tree is {\em unrooted}.

We also need to set some notation. The third condition in the
definition above is necessary for the following definition to make
sense.

\begin{defi}
Let $T=(V,E)$ be a directed tree. Given a vertex $v \in V$, $v\neq
\root$, we define its {\em parent} as the unique vertex $u$ such that
$(u,v) \in E$ and we denote it by  $u:=\parent(v)$. For an integer $n \geq 2$, we
inductively define the operator $\parent^n$ as
$\parent^n(v):=\parent(\parent^{n-1}(v))$, whenever
$\parent^{n-1}(v)\neq \root$. In such a case, we say that $v$ has an
$n$-ancestor and we denote the set of all vertices that have
$n$-ancestors as $V^n$. Also, if $u=\parent(v)$ we say that $v$ is a
{\em child} of $u$ and we denote the set of children of $u$ by
$\child(u)$. For $n\in \N$ and $u \in V$, we also define the set
$$
\child^n(u) := \{  v \in V \, : \, v \hbox{ has an $n$-ancestor and }\parent^n(v)=u \}.
$$
If $v \in \child^n(u)$ for some $n\in \N$, we say that $v$ is a descendant of $u$.
\end{defi}

We will use frequently, and without mentioning it, the equivalence
between $u=\parent^n(v)$ and $v \in \child^n(u)$. Also, observe that
if the tree $T$ is unrooted, then $V^n=V$ for all $n \in \N$.

\subsection*{${\mathbf L}^p$ space of a tree.} Lastly, we define the
Banach spaces we will be dealing with. First, we mention that given a
countable set $V$, we will refer (abusing the notation) to the set
$\lambda=\{ \lambda_v \in \R_+ \, : \, v \in V\}$, indexed by $V$ as a
{\em sequence}, and we will denote it by  $\lambda=\{
\lambda_v\}_{v\in V}$. The Banach spaces we use throughout this paper
are always vector spaces over the complex numbers.

\begin{defi}
Let $1 \leq p < \infty$ and let $T=(V,E)$ be a directed tree. Let
$\lambda=\{ \lambda_v \}_{v \in V}$ be a sequence of positive
numbers. We denote by $\Lp$ the space of complex-valued functions $f:
V \to \C$ such that
$$
\sum_{v \in V} |f(v)|^p \lambda_v < \infty.
$$
\end{defi}
This is a Banach space if we endow it with the norm 
$$
\| f \|_p = \Bigg( \sum_{v \in V} |f(v)|^p \lambda_v \Bigg)^{1/p}.
$$
If $p=2$, this is also a Hilbert space (with the obvious inner
product). We do not consider in this paper the space ${\mathbf
  L}^\infty(T,\lambda)$, since it is not separable, regardless of the
choice of positive weights $\lambda$ (e.g., \cite[p. 115]{megginson}).
Observe that the only case of interest for us is when the sequence $\lambda$ is
composed of strictly positive numbers: if $\lambda_v=0$ for some $v$, it is
easily seen that the space can then be written as a direct sum of
(perhaps infinitely many) spaces on smaller directed trees.

The graph structure, obviously, has nothing to do with Banach space structure
of $\Lp$ itself. What is interesting in this setting, are the
operators that we can build here, which we define in the next section.

\section{The Shift Operator and its Hypercyclicity}\label{sec_shift}

We can now introduce one of the main objects of study of this paper.

\begin{defi}
Let $T=(V,E)$ be a directed tree, let $\lambda=\{ \lambda_v \}_{v \in
  V}$ be a positive sequence and let $1 \leq p < \infty$. The {\em
  shift} $S: \Lp \to \Lp$ is the operator defined as
\begin{equation*}
(S f) (v)=
\begin{cases}
f(\parent(v)), & \text{ if $v \neq \root$, and} \\
0, & \text{ if $v=\root$.}
\end{cases}
\end{equation*}
\end{defi}

The following proposition was established for weighted shifts (instead
of shifts on weighted spaces) in the case $p=2$ in \cite{JJS}. The
proof is the same for our case, but we include it here for the sake of
completeness.

\begin{proposition}
Let $T=(V,E)$ be a directed tree, let $\lambda=\{ \lambda_v \}_{v \in
  V}$ be a positive sequence and  let $1 \leq p < \infty$. The
operator $S: \Lp \to \Lp$ is bounded if and only if 
$$
\sup_{u \in V}  \sum_{v \in \child(u)} \frac{\lambda_v}{\lambda_u} < \infty.
$$
In either case, 
$$
\| S\| := \Bigg( \sup_{u \in V} \sum_{v \in \child(u)}
  \frac{\lambda_v}{\lambda_u} \Bigg)^{1/p}.
$$
\end{proposition}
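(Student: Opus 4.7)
The plan is to compute $\|Sf\|_p^p$ directly by reindexing the sum, which will give both the boundedness criterion and the exact value of the norm in one go.

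First I would write
\begin{equation*}
\|Sf\|_p^p = \sum_{v \in V} |(Sf)(v)|^p \lambda_v = \sum_{v \neq \root} |f(\parent(v))|^p \lambda_v,
\end{equation*}
and then partition the set $\{v \in V : v \neq \root\}$ according to the parent: each non-root vertex $v$ has a unique $u = \parent(v)$, and conversely the children of a given $u$ exhaust, disjointly, all non-root vertices. This gives
\begin{equation*}
\|Sf\|_p^p = \sum_{u \in V} |f(u)|^p \sum_{v \in \child(u)} \lambda_v = \sum_{u \in V} |f(u)|^p \lambda_u \cdot \frac{\sum_{v \in \child(u)} \lambda_v}{\lambda_u}.
\end{equation*}
Let $M := \sup_{u \in V} \sum_{v \in \child(u)} \lambda_v/\lambda_u$. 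If $M < \infty$, the displayed identity immediately yields $\|Sf\|_p^p \leq M \|f\|_p^p$ for every $f \in \Lp$, so $S$ is bounded and $\|S\| \leq M^{1/p}$.

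For the reverse inequality and the converse (necessity), I would test $S$ on the characteristic functions $e_u := \chi_{\{u\}}$. These lie in $\Lp$ with $\|e_u\|_p^p = \lambda_u$, and from the definition of $S$ we get $(Se_u)(v) = 1$ when $v \in \child(u)$ and $0$ otherwise. Hence
\begin{equation*}
\frac{\|Se_u\|_p^p}{\|e_u\|_p^p} = \frac{1}{\lambda_u}\sum_{v \in \child(u)} \lambda_v.
\end{equation*}
Taking the supremum over $u$ gives $\|S\|^p \geq M$, which combined with the previous bound yields $\|S\|^p = M$. The same test also handles necessity: if $M = \infty$, then $\|Se_u\|_p / \|e_u\|_p$ is unbounded in $u$, so $S$ cannot be a bounded operator on $\Lp$.

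There is essentially no obstacle here; the proof is a one-line reindexing together with the right choice of test vectors. The only small point to be careful about is verifying that reordering the sum is legitimate — but since all terms are nonnegative this is just Tonelli applied to counting measure on $V$, and the partition of non-root vertices by their parent is exact because the tree structure guarantees each non-root vertex has a unique parent.
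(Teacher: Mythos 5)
Your proof is correct and follows essentially the same route as the paper: the same reindexing of $\sum_{v\neq\root}|f(\parent(v))|^p\lambda_v$ over parents gives the upper bound, and testing on characteristic functions of single vertices (you use unnormalized $\chi_{\{u\}}$ and take ratios where the paper normalizes to $\lambda_u^{-1/p}\chi_u$, a purely cosmetic difference) gives the lower bound and necessity. No issues.
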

\begin{proof}
Assume $\ds M:=\sup_{u \in V} \sum_{v \in \child(u)}
\frac{\lambda_v}{\lambda_u}   < \infty$. Let $f \in \Lp$. Then
\begin{eqnarray*}
\| S f \|_p^p 
&=& \sum_{v \in V} |(Sf)(v)|^p \lambda_v \\
&=& \sum_{\substack{v \in V, \\ v \neq \root}} |f(\parent(v))|^p \lambda_v \\
&=& \sum_{u \in V} |f(u)|^p \lambda_u \sum_{v \in \child(u)} \frac{\lambda_v}{\lambda_u} \\
& \leq &  M  \sum_{u \in V} |f(u)|^p \lambda_u \\
& = & M \| f \|_p^p.
\end{eqnarray*}
Hence, $\| S f \|_p \leq M^{1/p} \| f \|_p$. Thus $S$ is bounded and $\| S \| \leq M^{1/p}$.

On the other hand, assume $S$ is bounded. Let $u \in V$, and denote by
$\chi_{u}$ the characteristic function of the vertex $u$. Define
$f_u:=\frac{1}{\lambda_u^{1/p}} \chi_u$. Clearly,  $\| f_u \|_p=1$. Then,  if $v
\in V$, $v \neq \root$, we have $S f_u  (v) = f_u (\parent(v))$ and this
is zero unless $u=\parent(v)$. Therefore,
$$
\| S f_u \|_p^p 
= \sum_{v \in V} |(S f_u)(v)|^p \lambda_v 
= \sum_{\substack{v \in V, \\ v \neq \root}} |f_u(\parent(v))|^p \lambda_v 
= \sum_{v \in \child(u)} \frac{\lambda_v}{\lambda_u}.
$$
Hence 
$$
\sup_{u \in V} \sum_{ v \in \child(u)} \frac{\lambda_v}{\lambda_u}  \leq \| S\|^p,
$$
which implies that $\ds \sup_{u \in V} \sum_{v \in \child(u)}
\frac{\lambda_v}{\lambda_u}  < \infty$ and 
$$
\| S \| = \Bigg(\sup_{u \in V} \sum_{v \in \child(u)} \frac{\lambda_v}{\lambda_u} \Bigg)^{1/p},
$$
concluding the proof.
\end{proof}

Our first observation is that the shift $S$ cannot be hypercyclic if the tree $T$ has a
root (for example, if $S$ is the unilateral forward shift on the tree $\N_0$). Note that
\begin{equation*}
(S^n f) (v)=
\begin{cases}
f(\parent^n(v)), & \text{ if $v \in V^n$, and} \\
0, & \text{ if $v \notin V^n$,}
\end{cases}
\end{equation*}
for any $f \in \Lp$ and any $n \in \N$.

\begin{proposition}
Let $T=(V,E)$ be a directed tree, let $\lambda=\{\lambda_v \}_{v\in
  V}$ be a positive sequence and let $1\leq p<\infty$. If $T$ has a
root, then $S: \Lp \to \Lp$ is not hypercyclic.
\end{proposition}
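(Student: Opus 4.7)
The plan is to exploit the fact that the value at the root is destroyed immediately by $S$. Concretely, from the formula $(S^n f)(v)=f(\parent^n(v))$ when $v\in V^n$ and $0$ otherwise, we see that $\root\notin V^n$ for any $n\geq 1$ (since $\root$ has no parent), so $(S^n f)(\root)=0$ for every $n\geq 1$ and every $f\in\Lp$.

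Next, I would observe that the point evaluation $\operatorname{ev}_{\root}:f\mapsto f(\root)$ is a bounded linear functional on $\Lp$: indeed $|f(\root)|^p\lambda_{\root}\leq\|f\|_p^p$, so $|f(\root)|\leq\lambda_{\root}^{-1/p}\|f\|_p$. Consequently the hyperplane
\[
N:=\{g\in\Lp:g(\root)=0\}
\]
is a closed proper subspace of $\Lp$ (it is proper because $\chi_{\root}\in\Lp\setminus N$, using that $\lambda_{\root}>0$).

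Now for any $f\in\Lp$ the orbit satisfies
\[
\{S^n f:n\in\N_0\}\subseteq \{f\}\cup N,
\]
since $S^n f\in N$ for every $n\geq 1$ by the first step. Both $\{f\}$ and $N$ are closed, so the closure of the orbit is still contained in $\{f\}\cup N$. But $\Lp\setminus N$ contains infinitely many vectors (for instance all nonzero scalar multiples of $\chi_{\root}$), whereas $\{f\}\cup N$ can omit at most the points of $\Lp\setminus N$ different from $f$; in particular $\{f\}\cup N\neq\Lp$. Therefore the orbit is not dense, and since $f$ was arbitrary, $S$ has no hypercyclic vector.

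There is no real obstacle here: the whole argument is the one-line observation that $(S^n f)(\root)=0$ for $n\geq 1$ together with the continuity of the evaluation at $\root$. The only thing worth stating carefully is why $\root\notin V^n$, which follows directly from the definition of $V^n$ (it requires $\parent^{n-1}(v)\neq\root$, and $\parent(\root)$ does not even exist).
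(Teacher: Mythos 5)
Your proof is correct and rests on exactly the same key observation as the paper's, namely that $(S^n f)(\root)=0$ for all $n\geq 1$ together with the continuity of evaluation at the root; the paper simply tests the orbit against the specific target $\chi_{\root}$ and derives the quantitative contradiction $(\lambda_{\root})^{1/p}\leq\|S^{n_k}f-\chi_{\root}\|_p\to 0$, while you package the same fact as the orbit being trapped in the closed proper set $\{f\}\cup N$. Your handling of the $n=0$ term is, if anything, slightly more careful than the paper's.
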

\begin{proof}
Since $(Sf)(\root)=0$ for every $f \in \Lp$, it follows that $(S^n
f)(\root)=0$ for all $n \in \N$. Let $\chi_\root$ denote the
characteristic function of $\root$. If $f$ were a hypercyclic vector,
it would follow that there exists an increasing sequence $\{n_k\}$ of
positive integers such that
$$
\| S^{n_k} f - \chi_{\root} \|_p  \to 0,
$$
as $k \to \infty$. But since
\begin{eqnarray*}
(\lambda_{\root})^{1/p} 
&=& | 0 -1 | \ (\lambda_{\root})^{1/p} \\
&=& | (S^{n_k} f)(\root) - \chi_{\root}(\root) | \ (\lambda_{\root})^{1/p} \\
&\leq& \| S^{n_k} f - \chi_{\root} \|_p,
\end{eqnarray*}
this is a contradiction. Hence $f$ cannot be a hypercyclic vector and
$S$ cannot be a hypercyclic operator.
\end{proof}

The next observation is that the shift $S$ cannot be hypercyclic if the tree $T$ has a
vertex with outdegree larger than $1$.

\begin{proposition}
Let $T=(V,E)$ be a directed tree, let $\lambda=\{\lambda_v \}_{v\in
  V}$ be a positive sequence and let $1\leq p<\infty$. If $T$ has at
least one vertex of outdegree at least $2$, then $S: \Lp \to \Lp$ is
not hypercyclic.
\end{proposition}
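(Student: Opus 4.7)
My plan is to produce a proper closed subspace $W\subseteq\Lp$ such that $S^n f\in W$ for every $f\in\Lp$ and every $n\ge 1$. Combined with the observation (made just after the definition of hypercyclicity) that $Sf$ is hypercyclic whenever $f$ is, this forces a contradiction, since the orbit of $Sf$ would then be both dense in $\Lp$ and contained in a proper closed subspace.

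To build $W$, I would use the hypothesis to pick a vertex $u\in V$ with two distinct children $v_1,v_2\in\child(u)$, and set
\[
W:=\{\,g\in\Lp : g(v_1)=g(v_2)\,\}.
\]
The estimate $|g(v)|^p\lambda_v\le\|g\|_p^p$ shows that point evaluation $g\mapsto g(v)$ is a bounded linear functional on $\Lp$; hence $W$ is the kernel of the continuous functional $g\mapsto g(v_1)-g(v_2)$ and is therefore closed. It is proper because, for instance, $\chi_{v_1}\notin W$.

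The main (and really the only) calculation is to verify $S^n f\in W$ for every $n\ge 1$ and every $f\in\Lp$. Using the formula for $S^n$ displayed just before the preceding proposition, and noting that $\parent^n(v_i)=\parent^{n-1}(u)$ for $i=1,2$, one sees that either both $v_1,v_2\in V^n$ (precisely when $u\in V^{n-1}$), in which case
\[
(S^n f)(v_1)=f(\parent^{n-1}(u))=(S^n f)(v_2),
\]
or neither belongs to $V^n$, in which case both values vanish. I then conclude: if $f$ were hypercyclic, so would $Sf$ be, and the orbit $\{S^{n}(Sf)\}_{n\ge 0}=\{S^{n+1}f\}_{n\ge 0}\subseteq W$ could not be dense in $\Lp$. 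The only point requiring care is the passage from $f$ to $Sf$, needed to discard the initial vector $f$, which itself might not lie in $W$; beyond this small sleight, there is no real obstacle.
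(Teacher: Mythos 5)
Your argument is correct, and it rests on exactly the same key observation as the paper's proof: if $v_1,v_2$ are distinct children of the same vertex $u$, then $(S^n f)(v_1)=(S^n f)(v_2)$ for all $n\geq 1$ and all $f\in\Lp$ (your bookkeeping with $V^n$ and $\parent^{n-1}(u)$ is accurate, including the case $n=1$). Where you diverge is in how the contradiction is extracted. The paper argues quantitatively: it fixes the target $\chi_{v_1}$, chooses $\epsilon<((\lambda_{v_1})^{-1/p}+(\lambda_{v_2})^{-1/p})^{-1}$, and shows that no $S^N f$ can be $\epsilon$-close to $\chi_{v_1}$, since the common value $z=(S^Nf)(v_1)=(S^Nf)(v_2)$ would have to be simultaneously near $1$ and near $0$. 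You instead package the same fact structurally: the tail of the orbit lies in the proper closed subspace $W=\{g\in\Lp : g(v_1)=g(v_2)\}$, the kernel of the bounded functional $g\mapsto g(v_1)-g(v_2)$, and a dense set cannot be contained in a proper closed subspace; your passage from $f$ to the (also hypercyclic) vector $Sf$ correctly disposes of the initial term, though one could equally note that a dense subset of $\Lp$ remains dense after deleting one point. Your route avoids the $\epsilon$-bookkeeping and buys a bit more: since $W$ is a subspace, the same argument rules out density of the set of all scalar multiples of orbit points, not just of the orbit itself. The paper's route is more elementary in that it exhibits an explicit open ball the orbit never enters, without invoking duality or the Hahn--Banach-flavored fact that proper closed subspaces are nowhere dense. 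Both are complete proofs.
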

\begin{proof}
Let $w$ be the vertex with outdegree $n$, with $n \geq 2$ and let
$v_1$ and $v_2$ be two different elements in $\child(w)$. Observe that
$\parent(v_1)=\parent(v_2)$ and hence
$$
(S^k f)(v_1)= f(\parent^k(v_1))=f(\parent^k(v_2))=(S^k f)(v_2)
$$ 
for all $k \in \N$ such that $v_1$ and $v_2$ have $k$-ancestors. If
$v_1$ and $v_2$ do not have $k$-ancestors, then $(S^k f)(v_1)= 0=(S^k
f)(v_2)$. Thus
$$
(S^k f)(v_1)= (S^k f)(v_2)
$$ 
for every $k \in \N$.

Let $\epsilon> 0$ such that $\epsilon <
((\lambda_{v_1})^{-1/p}+(\lambda_{v_2})^{-1/p})^{-1}$. If $f$ were a
hypercyclic vector for $S$, there would exist $N \in \N$ such that
$$
\| S^{N} f - \chi_{v_1} \|_p < \epsilon.
$$

We have then that
$$
| (S^{N} f)(v_1) - \chi_{v_1} (v_1) |  (\lambda_{v_1})^{1/p} \leq
\| S^{N} f - \chi_{v_1} \|_p < \epsilon,
$$
and
$$
| (S^{N} f)(v_2) - \chi_{v_1} (v_2) |  (\lambda_{v_2})^{1/p} \leq
\| S^{N} f - \chi_{v_1} \|_p < \epsilon.
$$
And hence, we have
$$
|  (S^{N} f)(v_1) - 1 | (\lambda_{v_1})^{1/p}   < \epsilon   \qquad
\hbox{ and } \qquad |  (S^{N} f)(v_2)   |(\lambda_{v_2})^{1/p}   <
\epsilon.
$$
Define  $z:=(S^{N} f)(v_1)=(S^{N} f)(v_2)$. We then have
$$
| z- 1 |  < \epsilon  (\lambda_{v_1})^{-1/p}  \qquad \hbox{ and }
\qquad | z  | < \epsilon  (\lambda_{v_2})^{-1/p}.
$$
But the first inequality above implies that
$$
1 - |z| < \epsilon  (\lambda_{v_1})^{-1/p} 
$$
and hence that
$$
1-\epsilon  (\lambda_{v_1})^{-1/p} < |z| < \epsilon  (\lambda_{v_2})^{-1/p} 
$$
which in turn implies that
$$
1-\epsilon  (\lambda_{v_1})^{-1/p} < \epsilon  (\lambda_{v_2})^{-1/p}, 
$$
which contradicts the choice of $\epsilon$. Hence $f$ cannot be a
hypercyclic vector and $S$ cannot be a hypercyclic operator.
\end{proof}

The previous two propositions imply that in order for $S$ to be
hypercyclic the tree $T$ cannot have a root and cannot have vertices
of outdegree larger than $1$. It is easy to see, then,  that the tree
must be either isomorphic to the directed graph $\ds
\left(\Z,\{(n,n+1) \, : \, n \in \Z\}\right)$ if $T$ has no leaf (see
the picture below),

\begin{center}
\begin{tikzpicture}
\GraphInit[vstyle=Normal]
 \Vertex[x=0,y=0,empty=true,NoLabel]{A}
 \Vertex[x=2, y=0]{-2}
 \Vertex[x=4, y=0]{-1} 
 \Vertex[x=6, y=0]{0} 
 \Vertex[x=8, y=0]{1} 
 \Vertex[x=10, y=0]{2} 
  \Vertex[x=12,y=0,empty=true,NoLabel]{B}
 \SetUpEdge[style={->}]
 \Edges(-2,-1,0,1,2)
 \SetUpEdge[style={->,dotted}]
 \Edges(A,-2)
 \Edges(2,B)
\end{tikzpicture}
\end{center}
\medskip
\noindent or isomorphic to the directed graph $\ds \left(\N_0,\{
  (n+1,n) \, : \, n\in \N_0 \}\right)$ if $T$ has a leaf (see the picture below).
\medskip
\begin{center}
\begin{tikzpicture}
\GraphInit[vstyle=Normal]
\Vertex[x=0, y=0]{0} 
\Vertex[x=2, y=0]{1} 
\Vertex[x=4, y=0]{2} 
\Vertex[x=6, y=0]{3} 
\Vertex[x=8, y=0,empty=true,NoLabel]{B}
 \SetUpEdge[style={<-}]
 \Edges(0,1,2,3)
 \SetUpEdge[style={<-,dotted}]
 \Edges(3,B)
\end{tikzpicture}
\end{center}
\medskip

In the former case, $S$ is just a bilateral shift on a weighted
$\ell^p(\Z)$ space and in the latter case $S$ is just a unilateral
backward shift on a weighted $\ell^p(\N_0)$ space. The hypercyclicity
in these two cases has been characterized by Salas~\cite{salas} (see
\cite[Theorems 1.38 and 1.40]{BaMa} and \cite[Theorems 4.3 and
4.12]{GEP} for alternative statements of Salas' result).

{\bf Note added:} The referee has kindly pointed out to us that in
\cite{GE}, Grosse-Erdmann has considered what he calls ``weighted
  pseudo-shifts'' on sequence spaces, and characterized their
  hypercyclicity. It is not hard to see that the shifts
  considered in Section~\ref{sec_shift} are weighted
  pseudo-shifts and, therefore, Grosse-Erdmann's characterization
  applies. Nevertheless, we should point out that, for the case where
  the structure of directed trees is available, our results give an
  easier-to-check formulation of the characterization, since we are
  able to tell that the shift may be hypercyclic only if the tree reduces
  to one of the cases already considered by Salas in \cite{salas}. We
  thank the referee for the observation.

\section{The Adjoint of the Shift Operator and the Backward
  Shift}\label{sec_adjoint}

Our goal in this section is to identify the adjoint operator of the
shift operator on a directed tree. The case for the Hilbert space
adjoint ($p=2)$ was done in \cite{JJS}. After we identify the adjoint,
we will define the backward shift.

The following result is standard in the theory of ${\mathbf L}^p$ spaces.

\begin{proposition}\label{prop:dual}
Let $T=(V,E)$ be a directed tree, let $\lambda=\{\lambda_v \}_{v\in V}$
be a positive sequence, let $1<p<\infty$ and let
$q=\frac{p}{p-1}$. For $g \in \Lq$ define $\Phi_g : \Lp \to \C$ as 
$$
\Phi_g(f) = \sum_{v\in V} f(v) g(v) \lambda_v.
$$
Then $\Phi_g$ is a bounded linear functional on $\Lp$. Conversely, if
$\Phi$ is a bounded linear functional on $\Lp$, there exists $g \in
\Lq$ such that $\Phi=\Phi_g$. Moreover, $\| \Phi_g \| = \| g \|_q$.
\end{proposition}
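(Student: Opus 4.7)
The plan is to recognize $\Lp$ as a concrete realization of $L^p$ on the $\sigma$-finite measure space $(V, 2^V, \mu)$ with $\mu(\{v\}) = \lambda_v$, under which this proposition is just the standard $L^p$–$L^q$ duality for $1 < p < \infty$. Since the underlying measure is purely atomic and countable, I would give a self-contained argument, as the computations become essentially bookkeeping with absolutely convergent series.

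For the forward direction, given $g \in \Lq$ and $f \in \Lp$, I would factor $f(v)g(v)\lambda_v = \bigl(f(v)\lambda_v^{1/p}\bigr)\bigl(g(v)\lambda_v^{1/q}\bigr)$ and apply H\"older's inequality term-by-term to the defining series to obtain
$$
|\Phi_g(f)| \leq \sum_{v \in V} |f(v)||g(v)|\lambda_v \leq \| f \|_p \, \| g \|_q,
$$
which shows $\Phi_g$ is well-defined, linear, and bounded with $\|\Phi_g\| \leq \|g\|_q$. To prove equality, I would plug in $f_0(v) := \overline{\operatorname{sgn}(g(v))}\, |g(v)|^{q-1}$ (with $\operatorname{sgn}(0) = 0$). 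Using the identity $p(q-1) = q$, a direct computation gives $\| f_0 \|_p = \| g \|_q^{q/p}$ and $\Phi_g(f_0) = \| g \|_q^{q}$, so $\Phi_g(f_0)/\|f_0\|_p = \|g\|_q^{q-q/p} = \|g\|_q$, yielding $\|\Phi_g\| \geq \|g\|_q$.

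For the converse, given a bounded linear functional $\Phi$ on $\Lp$, define $g(v) := \Phi(\chi_v)/\lambda_v$. For any finite subset $F \subseteq V$, the function $f_F(v) := \overline{\operatorname{sgn}(g(v))}\, |g(v)|^{q-1}\chi_F(v)$ lies in $\Lp$ since $F$ is finite, and by linearity $\Phi(f_F) = \sum_{v \in F} |g(v)|^q \lambda_v$ while $\|f_F\|_p = \bigl(\sum_{v \in F} |g(v)|^q \lambda_v\bigr)^{1/p}$. The bound $|\Phi(f_F)| \leq \|\Phi\|\|f_F\|_p$ then rearranges to
$$
\Bigg(\sum_{v \in F} |g(v)|^q \lambda_v\Bigg)^{1/q} \leq \| \Phi \|,
$$
and taking the supremum over finite $F$ (monotone convergence) gives $g \in \Lq$ with $\|g\|_q \leq \|\Phi\|$. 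Finally, since finitely-supported functions are dense in $\Lp$ (because $p < \infty$ makes the defining series absolutely convergent, so tails vanish) and both $\Phi$ and $\Phi_g$ are continuous and agree on such functions, one concludes $\Phi = \Phi_g$; equality of norms then follows from the forward direction.

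The routine step that nevertheless deserves the most care is the truncation argument producing $g \in \Lq$: one must test against \emph{finitely-supported} $f_F$ to ensure the test functions actually belong to $\Lp$ before the global estimate $\|g\|_q \leq \|\Phi\|$ is in hand, and then invoke density to extend the identification $\Phi = \Phi_g$ from the finitely-supported functions to all of $\Lp$.
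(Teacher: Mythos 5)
Your proof is correct. Note, however, that the paper offers no proof of this proposition at all: it is stated as ``standard in the theory of ${\mathbf L}^p$ spaces'' and used thereafter as a citation to classical duality theory, so there is nothing to compare against line by line. Your self-contained argument is exactly the standard one for purely atomic measures: H\"older via the factorization $f(v)g(v)\lambda_v = \bigl(f(v)\lambda_v^{1/p}\bigr)\bigl(g(v)\lambda_v^{1/q}\bigr)$ for boundedness, the test function $\overline{\operatorname{sgn}(g(v))}\,|g(v)|^{q-1}$ together with $p(q-1)=q$ for the reverse norm inequality, and truncation to finite supports plus density of finitely supported functions for the converse. All the computations check out, including the exponent bookkeeping $\|g\|_q^{q-q/p}=\|g\|_q$ and the rearrangement $\bigl(\sum_{v\in F}|g(v)|^q\lambda_v\bigr)^{1/q}\leq\|\Phi\|$. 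The only cosmetic caveats are the degenerate cases $g=0$ (where $\Phi_g(f_0)/\|f_0\|_p$ is not defined but the norm equality is trivial) and $\sum_{v\in F}|g(v)|^q\lambda_v=0$ in the converse (where the rearrangement divides by zero but the inequality holds vacuously); both are harmless and worth a one-line remark at most.
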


Henceforth, we identify the dual space of $\Lp$ with $\Lq$ and we will
use the identification of the vector $g \in \Lq$ with the functional
$\Phi_g$ on $\Lp$. We can now compute the adjoint of a shift.

\begin{proposition}\label{prop:adjoint}
Let $T=(V,E)$ be a directed tree, let $\lambda=\{ \lambda_v \}_{v \in
  V}$ be a positive sequence, let $1 < p < \infty$, and let
$q:=\frac{p}{p-1}$. Assume the operator $S: \Lp \to \Lp$ is
bounded. Then $S^* : \Lq \to \Lq$ is given by
$$
(S^* g)(u)=\sum_{v\in \child(u)} g(v) \frac{\lambda_v}{\lambda_u},
$$
for each $g \in \Lq$ and $u \in V$.
\end{proposition}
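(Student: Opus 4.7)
The plan is to exploit the duality identification from Proposition~\ref{prop:dual}: for $g \in \Lq$, the bounded linear functional $\Phi_g$ on $\Lp$ is $\Phi_g(f) = \sum_{v \in V} f(v) g(v) \lambda_v$. The adjoint $S^*g$ is the unique element of $\Lq$ for which $\Phi_g(Sf) = \Phi_{S^*g}(f)$ for every $f \in \Lp$, so the natural strategy is to expand $\Phi_g(Sf)$, reindex the sum, and read off $S^*g$ from the resulting expression.

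Concretely, starting from the definition of $S$,
$$
\Phi_g(Sf) = \sum_{v \in V} (Sf)(v)\, g(v)\, \lambda_v = \sum_{\substack{v \in V \\ v \neq \root}} f(\parent(v))\, g(v)\, \lambda_v,
$$
I would reindex by grouping the vertices $v \neq \root$ according to their parent. Since each such $v$ has a unique parent $u = \parent(v)$, and conversely $v \in \child(u)$ iff $\parent(v) = u$, the sum rearranges as
$$
\sum_{u \in V} \sum_{v \in \child(u)} f(u)\, g(v)\, \lambda_v = \sum_{u \in V} f(u)\, \lambda_u \Bigg( \sum_{v \in \child(u)} g(v)\, \frac{\lambda_v}{\lambda_u} \Bigg).
$$
Setting $h(u) := \sum_{v \in \child(u)} g(v)\, \lambda_v / \lambda_u$, the right-hand side is exactly $\Phi_h(f)$, and the uniqueness portion of Proposition~\ref{prop:dual} then forces $S^*g = h$, which is the desired formula.

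The routine points I would not dwell on are (i) the finiteness of each inner sum, which follows because the outdegree of $u$ is finite by the local-finiteness assumption on $T$, and (ii) the interchange of summation, which is justified by absolute convergence: the double sum equals $\sum_{v \neq \root} |f(\parent(v)) g(v)| \lambda_v$, which is finite by H\"older's inequality applied to $Sf \in \Lp$ and $g \in \Lq$ (using the boundedness of $S$). The only step with any substance is the verification that $h$ actually belongs to $\Lq$; but the map $f \mapsto \Phi_g(Sf) = \Phi_g \circ S$ is a composition of bounded linear maps and hence a bounded functional on $\Lp$, so Proposition~\ref{prop:dual} guarantees the existence of a representing element in $\Lq$, and the computation above shows this element is pointwise equal to $h$. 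This identifies $S^*g$ and completes the argument.
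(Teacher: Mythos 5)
Your proof is correct, and the reindexing computation at its heart is identical to the paper's. Where you genuinely differ is in how membership of $h(u):=\sum_{v\in \child(u)} g(v)\lambda_v/\lambda_u$ in $\Lq$ is established, which is the step the paper spends most of its effort on. The paper proves $h\in\Lq$ directly: it applies Jensen's inequality to each inner sum to get $|h(u)|^q \leq M^{q-1}\sum_{v\in\child(u)}|g(v)|^q \lambda_v\lambda_u^{-1}$, where $M=\sup_u \sum_{v\in\child(u)}\lambda_v/\lambda_u$ is finite because $S$ is bounded, and then sums over $u$ to obtain the explicit bound $\|h\|_q\leq M^{(q-1)/q}\|g\|_q$. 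You instead get $h\in\Lq$ for free from duality: $\Phi_g\circ S$ is a bounded functional, Proposition~\ref{prop:dual} produces a representing element of $\Lq$, and testing against characteristic functions identifies that element pointwise with $h$. Your route is shorter, is the standard functional-analytic argument, and is fully adequate for the proposition as stated (your H\"older justification of the rearrangement is also fine). What the paper's direct estimate buys, and your argument does not, is a quantitative norm bound on $S^*$ and, more importantly, an argument that survives at $q=1$: immediately after this proposition the paper reuses the inequality $\|h\|_1\leq\|g\|_1$ to define and bound an operator ``$S^*$'' on $\Lone$, where the duality identification $(\Lp)^*\cong\Lq$ underlying your argument is unavailable.
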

\begin{proof}
Let $g \in L^q$ and define $h: T \to \C$ as 
$$
h(u):= \sum_{v\in \child(u)} g(v) \frac{\lambda_v}{\lambda_u},
$$
where, as usual, we define a sum over an empty set to be zero. We
first show that $h \in \Lq$.

Let us define
$$
M:=\sup_{u \in V} \sum_{v\in \child(u)} \lambda_v \lambda_u^{-1}.
$$
Observe that, since $S$ is bounded, $M<\infty$.  Let $u \in V$. Clearly
$$
| h(u) | \leq \sum_{v\in \child(u)} |g(v)| \frac{\lambda_v}{\lambda_u}
$$
(note that if $u$ is a leaf then $h(u)=0$). By the classical
inequality of Jensen we have
$$
\left( \frac{\ds \sum_{v\in \child(u)} |g(v)|  \lambda_v
    \lambda_u^{-1}}{\ds \sum_{v \in \child(u)} \lambda_v
    \lambda_u^{-1}} \right)^q
\leq 
\frac{\ds \sum_{v\in \child(u)} |g(v)|^q \lambda_v \lambda_u^{-1}}{\ds
  \sum_{v \in \child(u)} \lambda_v \lambda_u^{-1}},
$$
which simplifies to
$$
\Bigg( \sum_{v\in \child(u)} |g(v)|  \lambda_v \lambda_u^{-1} \Bigg)^q
\leq
\Bigg(\sum_{v\in \child(u)} \lambda_v \lambda_u^{-1} \Bigg)^{q-1}
\Bigg( \sum_{v\in \child(u)} |g(v)|^q \lambda_v \lambda_u^{-1} \Bigg),
$$
and hence we have 
$$
|h(u)|^q \leq M^{q-1} \Bigg( \sum_{v\in \child(u)} |g(v)|^q \lambda_v
  \lambda_u^{-1} \Bigg).
$$
Therefore, multiplying by $\lambda_u$ and summing over all vertices $u \in V$, we get
$$
\sum_{u\in V}  |h(u)|^q \lambda_u 
\leq  M^{q-1} \sum_{u\in V} \Bigg( \sum_{v\in\child(u)} |g(v)|^q \lambda_v  \Bigg).
$$
The right-hand side of the previous expression is no larger than
$$
 M^{q-1} \sum_{v \in V} |g(v)|^q \lambda_v,
$$
and hence 
$$
\sum_{u\in V}  |h(u)|^q \lambda_u 
\leq  M^{q-1} \sum_{v \in V} |g(v)|^q \lambda_v
$$
which shows that $h \in\Lq$ and, in fact,  $\| h \|_q \leq M^{\frac{q-1}{q}} \| g \|_q$.

Now, let $f \in \Lp$. Then 
\begin{eqnarray*}
(S^* \Phi_g )(f)
&=&\Phi_g(Sf)\\
&=&\sum_{v \in V} (Sf)(v) g(v) \lambda_v  \\
&=&\sum_{\substack{v \in V, \\ v \neq \root}} f(\parent(v)) g(v)
\lambda_v \\
&=&\sum_{u \in V} f(u) \Bigg( \sum_{v \in \child(u)} g(v) \lambda_v \Bigg) \\
&=&\sum_{u \in V} f(u) \Bigg( \sum_{v \in \child(u)} g(v)
  \frac{\lambda_v}{\lambda_u} \Bigg) \lambda_u \\
&=&\sum_{u \in V} f(u) h(u)  \lambda_u \\
&=& \Phi_h (f),
\end{eqnarray*}
thus $S^* \Phi_g = \Phi_h$. If we identify, as usual, $\Phi_g$ with
$g$ and $\Phi_h$ with $h$ we have
$$
S^* g = h,
$$
which is what we wanted to prove.
\end{proof}

We now study the hypercyclicity of $S^*$. The first part of the
following proposition, for the case $p=2$, is proven implicitly in
\cite[Proposition 3.1.7]{JJS}.

\begin{proposition}\label{prop:leaf1}
Let $T=(V,E)$ be a directed tree, let $\lambda=\{ \lambda_v \}_{v \in
  V}$ be a positive sequence, let $1 < p < \infty$, and let
$q:=\frac{p}{p-1}$. Assume the operator $S: \Lp \to \Lp$ is
bounded. If the directed tree $T$ has a leaf, then the operator $S^*:
\Lq \to \Lq$ does not have dense range. Hence, $S^*$ is not
hypercyclic.
\end{proposition}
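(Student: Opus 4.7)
The plan is to exploit the explicit formula for $S^{*}$ from Proposition~\ref{prop:adjoint}, namely
$$
(S^* g)(u)=\sum_{v\in \child(u)} g(v) \frac{\lambda_v}{\lambda_u},
$$
together with the convention that a sum over the empty set is zero. If $w$ is a leaf of $T$, then $\child(w) = \emptyset$, so for every $g \in \Lq$ one has $(S^* g)(w) = 0$.

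From this single observation, the range of $S^{*}$ is contained in the set
$$
N_w := \{\, h \in \Lq \, : \, h(w) = 0 \,\},
$$
which is a closed proper subspace of $\Lq$ (proper because the characteristic function $\chi_w$ lies in $\Lq$ but not in $N_w$, and closed because point evaluation at $w$ is continuous on $\Lq$, as is easily verified from the norm estimate $|h(w)|\lambda_w^{1/q} \leq \|h\|_q$). Therefore $S^{*}$ cannot have dense range.

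To conclude that $S^{*}$ is not hypercyclic, I would argue by contradiction: suppose $f$ were a hypercyclic vector for $S^{*}$. Then the orbit $\{(S^*)^n f : n \in \N_0\}$ is dense in $\Lq$, and removing the single element corresponding to $n=0$ leaves a still-dense set $\{(S^*)^n f : n \geq 1\}$. But every element of this latter set lies in $\operatorname{ran}(S^*) \subseteq N_w$, so $N_w$ would be dense in $\Lq$, contradicting the fact that it is a proper closed subspace. Hence $S^{*}$ is not hypercyclic.

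There is essentially no obstacle here: the proof is a direct consequence of the formula for $S^{*}$ and the standard fact that a hypercyclic operator must have dense range. The only minor point worth stating cleanly is the continuity of point evaluation on $\Lq$ (so that $N_w$ is closed), which is immediate from $\lambda_w > 0$.
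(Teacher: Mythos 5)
Your proof is correct and rests on exactly the same observation as the paper's: a leaf $w$ has no children, so $(S^*g)(w)=0$ for every $g$, which places the range of $S^*$ at positive distance from $\chi_w$. The paper packages this through the duality with $\Lp$ (choosing a functional $\Phi$ with $\Phi(\chi_w)=1$ and testing $\Phi_{S^*g_n}$ against $\chi_w$), whereas you work directly in $\Lq$ with the closed subspace $N_w$ and continuity of point evaluation; this is only a presentational difference, and your version is if anything slightly more direct.
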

\begin{proof}
Let $w$ be a leaf and let $f:=\chi_{w}$. Since $f\neq 0$, we can
choose a functional $\Phi$ on $\Lp$ such that $\Phi(f)=1$. By
Proposition~\ref{prop:dual}, there exists $h \in \Lq$ such that
$\Phi=\Phi_h$. If $S^*$ had dense range, we could choose functions
$g_n \in \Lq$ such that $S^*g_n \to h$, and, hence, such that
$\Phi_{S^* g_n} \to \Phi_h$. But
$$
\Phi_{S^*g_n} (f)= \sum_{u \in V} f(u) (S^*g_n)(u)\lambda_u  =
(S^*g_n)(w) \lambda_w = \sum_{v \in \child(w)} g_n(v) \lambda_v=0,
$$
since $w$ is a leaf, and hence it has no children. But this implies that
$$
1=|0-1|= | \Phi_{S^* g_n} (f)- \Phi (f) | \leq \| \Phi_{S^* g_n} -
\Phi \| \ \| f \|_p \to 0,
$$
which is a contradiction. Hence $S^*$ does not have dense
range. Since every hypercyclic operator must have dense range, the
second part of the proposition follows.
\end{proof}

Observe that if we were to define an operator $S^*: \Lone \to \Lone$
by the expression
$$
(S^* f)(u)= \sum_{v \in \child u} f(v) \frac{\lambda_v}{\lambda_u},
$$
for each $u \in V$, then this operator would be bounded. Indeed, the
proof of Proposition~\ref{prop:adjoint} shows that if we set
$$
h(u):= \sum_{v\in \child(u)} g(v) \frac{\lambda_v}{\lambda_u},
$$
for each $u \in V$, then  $\| h \|_1 \leq \| g \|_1$. Hence $\| S^* g \|_1 \leq \| g\|_1$
and thus $S^*$ is a contraction. Therefore $S^*$ is not
hypercyclic. Nevertheless, we will denote this operator on $\Lone$ by
$S^*$, when the occasion arises.

The form of the operator $S^*$ on $\Lq$ suggests that a natural candidate for
study is the operator defined below. It will turn out that $S^*$ will
be unitarily equivalent to the following operator, with appropriate
weights. We will show this in the last section of this paper.

\begin{defi}
Let $T=(V,E)$ be a directed tree, let $\lambda=\{ \lambda_v \}_{v \in
  V}$ be a positive sequence and let $1 \leq q < \infty$. The {\em
  backward shift} is defined as the operator $B: \Lq \to \Lq$ given by
the expression
$$
(B f)(u)= \sum_{v \in \child(u)} f(v),
$$
for each $u \in V$. In the expression above, as it is usual, the sum
over an empty set is defined to be zero. 
\end{defi}

From now on, we will deal with the operator $B$, since the
hypercyclicity results we obtain are cleaner for $B$ than they are for
$S^*$. Let us show that, under certain conditions, $B$ is a bounded
operator. We denote by $\gamma(u)$ the cardinality of $\child(u)$.

\begin{proposition}
Let $T=(V,E)$ be a directed tree, let $\lambda=\{ \lambda_v \}_{v \in
  V}$ be a positive sequence and let $1 \leq  q < \infty$. If 
$$
\sup_{\substack{w \in V \\ w \neq \root}} \gamma(\parent(w))^{q-1}
\frac{\lambda_{\parent(w)}}{\lambda_w} < \infty,
$$
then the backward shift operator $B:\Lq \to \Lq$ is bounded.
\end{proposition}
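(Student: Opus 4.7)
The plan is a direct norm estimate via the power-mean (convexity) inequality, mirroring the structure of the earlier boundedness proof for $S$ but with a new combinatorial factor coming from the branching number $\gamma$.

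First I would expand $\|Bf\|_q^q = \sum_{u \in V} |(Bf)(u)|^q \lambda_u$ and apply to each inner sum the elementary inequality
$$
\Bigl| \sum_{v \in \child(u)} f(v) \Bigr|^q \leq \gamma(u)^{q-1} \sum_{v \in \child(u)} |f(v)|^q,
$$
valid for $q \geq 1$ (this is Jensen applied to the convex function $t \mapsto |t|^q$, or equivalently Hölder with exponents $q$ and $q/(q-1)$ against the constant function $1$; in the degenerate case $q=1$ it is just the triangle inequality, with the convention $\gamma(u)^0 = 1$, and leaves $u$ contribute nothing since $\child(u)=\emptyset$).

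Next I would interchange the order of summation. Since every $v \in V$ with $v \neq \root$ belongs to $\child(\parent(v))$ for a unique $u=\parent(v)$, I get
$$
\|Bf\|_q^q \leq \sum_{u\in V} \gamma(u)^{q-1} \lambda_u \sum_{v\in \child(u)} |f(v)|^q = \sum_{\substack{v \in V \\ v \neq \root}} \gamma(\parent(v))^{q-1} \lambda_{\parent(v)} |f(v)|^q.
$$
Multiplying and dividing by $\lambda_v$ inside the sum, and denoting $M:= \sup_{w \neq \root} \gamma(\parent(w))^{q-1} \lambda_{\parent(w)}/\lambda_w$, this is bounded above by
$$
M \sum_{\substack{v \in V \\ v \neq \root}} |f(v)|^q \lambda_v \leq M \|f\|_q^q.
$$
Taking $q$-th roots yields $\|Bf\|_q \leq M^{1/q} \|f\|_q$, which also shows $Bf \in \Lq$, so $B$ is a bounded operator with $\|B\| \leq M^{1/q}$.

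There is no real obstacle here; the only subtle point is remembering to apply the convexity inequality with the correct exponent $\gamma(u)^{q-1}$ rather than something involving $\lambda$-weights — this is precisely what makes the branching factor $\gamma(\parent(w))^{q-1}$ appear in the hypothesis. After the swap of summation, the proof closes purely by the definition of $M$.
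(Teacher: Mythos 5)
Your proof is correct and follows essentially the same route as the paper: Jensen's inequality (with the factor $\gamma(u)^{q-1}$) applied to each inner sum, followed by an interchange of the order of summation that re-indexes over children $v\neq\root$ via their unique parents, and then the bound by $M$ after multiplying and dividing by $\lambda_v$. The paper's own proof is identical in substance, so nothing further is needed.
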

\begin{proof}
Let 
$$
M:=\sup_{\substack{w \in V \\ w \neq \root}} \gamma(\parent(w))^{q-1}
\frac{\lambda_{\parent(w)}}{\lambda_w},
$$
and let $f \in \Lq$. Proceeding as in the proof of
Proposition~\ref{prop:adjoint}, by Jensen's inequality, for every $u \in V$ we have
$$
\Bigg( \sum_{v\in \child(u)} |f(v)| \Bigg)^q \leq (\gamma(u))^{q-1}
\sum_{v \in \child(u)} |f(v)|^q.
$$
It then follows that
\begin{eqnarray*}
\| B f \|^q_q 
&=& \sum_{u \in V} |Bf(u)|^q \lambda_u \\ 
&\leq & \sum_{u \in V} \Bigg( \sum_{v\in \child(u)} |f(v)| \Bigg)^q \lambda_u \\
&\leq &  \sum_{u \in V} (\gamma(u))^{q-1} \lambda_u \sum_{v \in \child(u)} |f(v)|^q \\
&=&  \sum_{\substack{w \in V \\ w \neq \root}} |f(w)|^q (\gamma(\parent(w)))^{q-1} \lambda_{\parent(w)}\\
&\leq & M   \sum_{\substack{w \in V \\ w \neq \root}} |f(w)|^q \lambda_{w} \\
& \leq & M \| f \|_q^q,
\end{eqnarray*}
and therefore $B$ is bounded.
\end{proof}

The special case of the unweighted space is simpler.

\begin{corollary}\label{cor_boundedB}
Let $T=(V,E)$ be a directed tree, and let $1 \leq q < \infty$. Let
$\lambda$ be the constant sequence defined by $\lambda_v=1$ for each
$v\in V$. If $q=1$, then the backward shift operator $B$ is bounded on
$\Lone$. If $1< q < \infty$, then $B$ is bounded on $\Lq$ if the set
$\{ \gamma(u) \, : \, u \in V\}$ is bounded; i.e., if the outdegrees
of the tree are bounded.
\end{corollary}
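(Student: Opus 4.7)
The plan is to apply the previous proposition with $\lambda_v = 1$ for every $v \in V$. With constant weights, $\lambda_{\parent(w)}/\lambda_w = 1$, so the sufficient condition for boundedness of $B$ collapses to the requirement that
\[
\sup_{\substack{w \in V \\ w \neq \root}} \gamma(\parent(w))^{q-1} < \infty.
\]
Verifying this inequality in each of the two cases is the whole task; there is no need to revisit the Jensen estimate, since the previous proposition already packages it.

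For $q=1$, I would observe that the exponent $q-1$ vanishes. Since $w \neq \root$ forces $w \in \child(\parent(w))$, in particular $\gamma(\parent(w)) \geq 1$, so each term in the supremum equals $1$. The supremum is therefore trivially finite, and the previous proposition delivers boundedness of $B$ on $\Lone$ (in fact as a contraction, with bound $1$).

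For $1 < q < \infty$, I would exploit the monotonicity of $t \mapsto t^{q-1}$ on $[1,\infty)$ to reduce finiteness of the displayed supremum to finiteness of $\sup_{w \neq \root} \gamma(\parent(w))$. Because $\{ \gamma(\parent(w)) : w \neq \root \} \subseteq \{ \gamma(u) : u \in V \}$, the standing hypothesis that the outdegrees of $T$ are bounded supplies exactly what is needed, and the previous proposition applies to give boundedness of $B$ on $\Lq$.

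There is no substantive obstacle; the corollary is a direct specialization of the previous proposition to the unweighted setting. The only minor technicality is to ensure $\gamma(\parent(w)) \geq 1$ so as to rule out any $0^0$ ambiguity in the $q=1$ case, and this is immediate from the definition of \emph{parent}.
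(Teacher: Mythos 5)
Your proposal is correct and matches the paper's intent exactly: the corollary is stated without proof precisely because it is the immediate specialization of the preceding boundedness proposition to $\lambda_v\equiv 1$, where the weight ratio is $1$ and the supremum reduces to $\sup_w \gamma(\parent(w))^{q-1}$, which is $1$ when $q=1$ and finite when the outdegrees are bounded. Your remark that $B$ is a contraction for $q=1$ is also consistent with the paper's comment following the corollary.
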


Let $\lambda$ be the constant sequence defined by $\lambda_v=1$ for
each $v\in V$. It turns out that the if $q=1$, the operator $B$ has
norm equal to one. If $1<q<\infty$, the condition in
Corollary~\ref{cor_boundedB} is not only sufficient, but also
necessary. We investigate these matters in a different paper \cite{CoMa}.

As it was the case in Proposition~\ref{prop:leaf1}, if the tree has
leaves, the backward shift operator is never hypercyclic.

\begin{proposition}\label{prop:leaf2}
Let $T=(V,E)$ be a directed tree, let $\lambda=\{ \lambda_v \}_{v \in
  V}$ be a positive sequence, let $1 \leq q < \infty$. Assume the
backward shift $B$ is bounded. If the directed tree $T$ has a leaf,
then the operator $B: \Lq \to \Lq$ is not hypercyclic.
\end{proposition}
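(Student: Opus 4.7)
The plan is to reproduce the strategy of Proposition~\ref{prop:leaf1}: exhibit an explicit vector that cannot be approximated by the orbit, or equivalently, show that the range of $B$ is not dense.

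The key observation is immediate from the definition of $B$: if $w$ is a leaf, then $\child(w)=\emptyset$, and hence for every $g\in\Lq$,
$$
(Bg)(w) = \sum_{v\in\child(w)} g(v) = 0.
$$
Consequently, the range of $B$ is contained in the closed subspace $V_w:=\{g\in\Lq : g(w)=0\}$ (the kernel of the bounded evaluation functional $g\mapsto g(w)$, whose boundedness comes from $|g(w)|\lambda_w^{1/q}\leq\|g\|_q$). Since $\chi_w\in\Lq\setminus V_w$, the subspace $V_w$ is proper and closed, so $B$ does not have dense range.

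To conclude, I would invoke the well-known fact that every hypercyclic operator has dense range, which is already used implicitly at the end of the proof of Proposition~\ref{prop:leaf1}. Concretely, if $f$ were a hypercyclic vector for $B$, then $Bf$ would also be hypercyclic (by the remark after the definition of hypercyclicity), and the orbit of $Bf$ lies in $\mathrm{range}(B)\subseteq V_w\subsetneq\Lq$, contradicting density.

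There is no real obstacle here: the proof is essentially a one-line observation, parallel to the root case of Proposition~3.2 and to the leaf case for $S^*$ in Proposition~\ref{prop:leaf1}. The only thing to be mildly careful about is that the argument works uniformly for $1\leq q<\infty$ (including $q=1$, where $B$ is a contraction), and that one is allowed to pass from the orbit of $f$ to the orbit of $Bf$ in order to land inside $\mathrm{range}(B)$.
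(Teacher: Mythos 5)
Your proof is correct and rests on exactly the same two facts as the paper's: $(Bg)(w)=0$ for every $g$ because $\child(w)=\emptyset$, and the point evaluation at $w$ is bounded via $|g(w)|\lambda_w^{1/q}\leq\|g\|_q$. The only (immaterial) difference is packaging: the paper derives the contradiction directly from $\|B^{n_k}g-\chi_w\|_q\to 0$, whereas you route it through ``range of $B$ lies in the proper closed subspace $\{g:g(w)=0\}$, so $B$ lacks dense range,'' which is the same argument the paper itself uses for Proposition~\ref{prop:leaf1}.
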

\begin{proof}
Let $w \in V$ be a leaf.  Since the sum over an empty set is zero, for
every $g \in \Lq$ we have
$$
 (B g)(w) = \sum_{v\in \child(w)} g(v) =0.
$$ 
Hence $(B^n g)(w)=0$ for every $n \in \N$. If $g$ were
a hypercyclic vector for $B$, there would exist an increasing sequence
of natural numbers $\{ n_k \}$ such that $\| B^{n_k} g - \chi_{w}
\|_q \to 0$. But then
$$
\lambda_w^{1/q} = | 0 - 1 | \lambda_w^{1/q}  = | (B^{n_k} g )(w) -
\chi_{w} (w) | \lambda_w^{1/q} \leq \| B^{n_k} g - \chi_{w} \|_q \to 0 
$$
which is impossible.
\end{proof}

\section{Hypercyclicity of the Backward Shift: Rooted Directed
  Trees}\label{sec_root}

We need to distinguish two cases to study the hypercyclicity of
$B$. In this section we deal with the case where the tree $T$ has a
root. Observe that in this case, for every vertex $u \in V$, there
exists a unique path (in the underlying graph of the tree $T$)
starting from $\root$ and ending at $u$. We denote by $|u|$ the length
of such a path.

For every $u \in V$, and every $n\in \N$, we denote by $\gamma(u,n)$
the cardinality of the set $\child^n(u)$. Observe that $\gamma(u,n)>0$
for every $u \in V$ and every $n \in \N$ if $T$ is leafless.

For every $u \in V$, for $1 \leq q < \infty$, and every $n \in \N$ we
denote by $\Omega(u,n)$ the number
$$
\Omega(u,n):=
\frac{1}{(\gamma(u,n))^q}  \sum_{v \in \child^n(u)} \lambda_v. 
$$

The next theorem gives a sufficient condition for hypercyclicity of
$B$, in terms of the numbers defined above.

\begin{theorem}\label{th_hyp_root}
Let $T=(V,E)$ be a leafless directed tree with a root, let $\lambda=\{
\lambda_v \}_{v \in V}$ be a positive sequence, and let $1 \leq q <
\infty$. Assume the backward shift operator $B: \Lq \to \Lq$ is
bounded. If there exists an increasing sequence of natural numbers
 $\{ n_k \}$ such that, for all $u \in V$ we have
$$
\Omega(u,n_k) \to 0
$$ 
as $k \to \infty$, then $B$ is hypercyclic.
\end{theorem}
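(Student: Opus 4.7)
The plan is to apply the Hypercyclicity Criterion (Theorem~\ref{the_hc}) using the given sequence $\{n_k\}$. I take $X$ to be the linear span of $\{\chi_u : u \in V\}$, i.e., the set of finitely supported functions on $V$, which is dense in $\Lq$.

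To verify condition (1), I use the iterated formula $(B^n f)(u) = \sum_{v \in \child^n(u)} f(v)$, proved easily by induction from the definition of $B$. This gives $B^n\chi_u = \chi_{\parent^n(u)}$ when $u \in V^n$ and $B^n\chi_u = 0$ otherwise. Since $T$ is rooted, every vertex $u$ sits at finite distance $|u|$ from $\root$, so $B^n\chi_u = 0$ for all $n > |u|$. By linearity, for every $x \in X$ we have $B^{n_k}x = 0$ once $n_k$ exceeds the maximum of $|u|$ over the support vertices of $x$; condition (1) thus holds trivially.

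For conditions (2) and (3), the natural candidate for $T_{n_k}$ is a ``right inverse'' for $B^{n_k}$ on $X$, obtained by spreading mass uniformly over the $n_k$-descendants. I define
$$T_{n_k}\chi_u := \frac{1}{\gamma(u,n_k)}\sum_{v\in \child^{n_k}(u)}\chi_v,$$
and extend linearly to $X$. The leafless hypothesis guarantees $\gamma(u,n_k) \geq 1$, so $T_{n_k}$ is well-defined (note that it need not be bounded on $\Lq$, which is fine since the criterion only requires it on $X$). A direct calculation using the formula above for $B^{n_k}$ gives $B^{n_k}T_{n_k}\chi_u = \chi_u$ \emph{exactly}, so condition (3) holds by linearity. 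For condition (2), I compute
$$\|T_{n_k}\chi_u\|_q^q = \frac{1}{\gamma(u,n_k)^q}\sum_{v\in\child^{n_k}(u)}\lambda_v = \Omega(u,n_k),$$
which tends to $0$ by hypothesis, and then linearity together with the triangle inequality extends the convergence to every $x \in X$.

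All three conditions of the Hypercyclicity Criterion are thereby satisfied, so $B$ is hypercyclic. The only substantive step is guessing the correct form of the maps $T_{n_k}$: once one sees that uniform spreading over $\child^{n_k}(u)$ produces a right inverse whose $\Lq$-norm is precisely $\Omega(u,n_k)^{1/q}$, the role of the quantity $\Omega(u,n_k)$ in the hypothesis is immediately clear, and no further obstacle arises.
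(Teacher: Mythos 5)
Your proposal is correct and follows essentially the same route as the paper: the same dense set of finitely supported functions, the same right-inverse maps $T_{n_k}$ (the paper writes them by the pointwise formula $(T_n g)(v)=\gamma(\parent^n(v),n)^{-1}g(\parent^n(v))$ on $V^n$, which agrees with your definition on the basis $\{\chi_u\}$), and the same identification $\|T_{n_k}\chi_u\|_q^q=\Omega(u,n_k)$ driving condition (2). No gaps.
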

\begin{proof}
We will verify each of the conditions of the Hypercyclicity Criterion
(Theorem \ref{the_hc}). Define $X$ as the set $X:=\{ g \in \Lq \, : \,
g \hbox{ is finitely supported } \}$. Clearly $X$ is dense in $\Lq$.

First, it can easily be seen that, for every $f \in \Lq$ and every $u
\in V$, we have
$$
(B^n f)(u) = \sum_{v \in \child^n(u)} f(v).
$$

\begin{enumerate}
\item If $g$ is finitely supported, there exists $N \in \N$ such that $g(v)=0$ for
all $v$ with $|v|\geq N$. For all $u\in V$, we have that if $v
\in \child(u)$ then $|v|=|u|+1$ and hence that if $v \in \child^n(u)$, then
$|v|=|u|+n$. Therefore, for any $u \in V$, if $n \geq N$ and $v \in \child^n(u)$ then
$g(v)=0$. It follows that $(B^n g)(u)=0$ for all $u \in V$ as soon as
$n \geq N$. Thus the function $B^n g$ is identically zero if $n \geq
N$. Therefore $ B^{n_k} g \to 0$ for all $g \in X$, as $k \to \infty$.

\item Given $g \in X$ and $n \in \N$, we define the complex-valued function $T_n g$ as
$$
(T_n g)(v):=
\begin{cases}
\frac{1}{\gamma(\parent^n(v),n)} g (\parent^n(v)), & \text{ if $v \in V_n$, and} \\
0, & \text{ if $v \notin V_n$,}
\end{cases}
$$
where, as before, $V^n$ denotes the set of vertices that have $n$-ancestors. It follows that
\begin{eqnarray*}
\| T_{n} g \|_q^q
&=& \sum_{v \in V} |(T_n g)(v)|^q \lambda_v  \\
&=& \sum_{v \in V^n} |(T_n g)(v)|^q \lambda_v  \\
&=& \sum_{v \in V^n} \frac{1}{(\gamma(\parent^n(v),n))^q}  |g(\parent^n(v))|^q \lambda_v \\
&= & \sum_{u  \in V}  |g(u)|^q \frac{1}{(\gamma(u,n))^q}  \sum_{v \in \child^n(u)} \lambda_v  \\
&=& \sum_{u  \in V}  |g(u)|^q \, \Omega(u,n).
\end{eqnarray*}

Evaluating at the sequence $\{n_k\}$, remembering that $g$ is
finitely-supported (and hence the last expression has only
finitely-many summands) and recalling that for every $u \in V$ we
have $\Omega(u,n_k)\to 0$ as $j\to \infty$, it follows that
$$
T_{n_k} g  \to 0,
$$
as $k \to \infty$.

\item Lastly, we show that $B^n (T_n g) = g$ for all $g \in
  X$ and $n \in \N$. Indeed, if $g \in X$, and $u \in V$, then
\begin{eqnarray*}
(B^n(T_ng)) (u) 
&=&  \sum_{v \in \child^n(u)} (T_n g)(v) \\
&=&  \sum_{v \in \child^n(u)} \frac{1}{\gamma(\parent^n(v),n)} g (\parent^n(v)) \\
&=&  \sum_{v \in \child^n(u)} \frac{1}{\gamma(u,n)} g(u) \\
&=& g(u),
\end{eqnarray*}
as desired. Hence $B^{n_k} T_{n_k} g \to g$ as $k\to \infty$, for each $g \in X$.
\end{enumerate}
Therefore, by the Hypercyclicity Criterion, $B$ is hypercyclic.
\end{proof}

We will need the following definition to state some of the coming results.

\begin{defi}
Let $T$ be a leafless directed tree. We say that $T$ has a {\em free end}
if there exists a vertex such that all of its descendants have degree one.
\end{defi}

The fact that the following corollary does not apply for $q=1$ is not
surprising, given the comment after Corollary~\ref{cor_boundedB}.

\begin{corollary}\label{cor_suf1}
Let $T=(V,E)$ be a leafless directed tree with a root, let $1 < q <
\infty$, let $\lambda$ be the constant sequence defined by
$\lambda_v=1$ for each $v \in V$, and assume that the backward shift
$B$ is bounded on $\Lq$.  If the tree $T$ has no free end, then $B$ is
hypercyclic.
\end{corollary}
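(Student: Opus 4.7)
The plan is to apply Theorem~\ref{th_hyp_root} with the simplest possible choice, $n_k = k$. Since $\lambda_v = 1$ for every $v \in V$, the quantity $\Omega(u,n)$ collapses to
$$
\Omega(u,n) = \frac{1}{\gamma(u,n)^q}\sum_{v \in \child^n(u)} 1 = \gamma(u,n)^{1-q},
$$
and because $q > 1$ the exponent $1-q$ is negative, so $\Omega(u,n_k) \to 0$ is equivalent to $\gamma(u,n_k) \to \infty$. The whole corollary thus reduces to proving the following claim: for every $u \in V$, $\gamma(u,n) \to \infty$ as $n \to \infty$.

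To prove this claim I would first observe that $n \mapsto \gamma(u,n)$ is non-decreasing. Indeed, since $T$ is leafless every vertex of $\child^n(u)$ has at least one child, and distinct vertices of a tree have disjoint sets of children, so $\gamma(u,n+1) \geq \gamma(u,n)$. Moreover, if at least one vertex of $\child^n(u)$ has outdegree $\geq 2$, then the inequality is strict and in fact $\gamma(u,n+1) \geq \gamma(u,n) + 1$. To upgrade monotonicity to divergence I would iterate the no-free-end hypothesis as follows. Suppose $\gamma(u,n_0) \geq N$; pick any $v \in \child^{n_0}(u)$. Since $v$ is not a free end, $v$ admits a proper descendant $v'$ whose outdegree is at least two, say $v' \in \child^j(v)$ with $j \geq 1$. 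Then $v' \in \child^{n_0+j}(u)$, and combining monotonicity on the range $n_0 \leq n \leq n_0+j$ with the strict increase produced by the branching at $v'$ one step later yields $\gamma(u,n_0+j+1) \geq \gamma(u,n_0+j) + 1 \geq N+1$. Iterating, $\gamma(u,n)$ eventually exceeds every bound, and monotonicity then forces $\gamma(u,n) \to \infty$.

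With the claim in hand, the choice $n_k = k$ gives $\Omega(u,n_k) \to 0$ for every $u \in V$, and Theorem~\ref{th_hyp_root} concludes the proof. The main subtlety I would watch for is that the no-free-end hypothesis only supplies one branching descendant per vertex; the iteration must restart from a freshly chosen descendant at each stage, since otherwise a finite budget of branchings could only push $\gamma(u,n)$ finitely high. The exclusion of the case $q=1$ is also natural from this vantage point, since then $\gamma(u,n)^{1-q} \equiv 1$ and the $\Omega$ hypothesis of Theorem~\ref{th_hyp_root} cannot be met with constant weights.
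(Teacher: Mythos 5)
Your proposal is correct and follows essentially the same route as the paper: both reduce the hypothesis of Theorem~\ref{th_hyp_root} (with the full sequence $n_k=k$) to the claim that $\gamma(u,n)\to\infty$, using $\Omega(u,n)=\gamma(u,n)^{1-q}$ in the unweighted case. The only difference is cosmetic --- the paper proves the claim by contradiction (an eventually constant $\gamma(u,\cdot)$ would produce a free end), while you run the same idea constructively via the monotonicity-plus-branching iteration, which is if anything slightly more explicit.
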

\begin{proof}
Let $u \in V$ be fixed.  Clearly, the sequence $\{ \gamma(u,n)\}_{n\in \N}$ is a
nondecreasing sequence of natural numbers. We claim that $\gamma(u,n)
\to \infty$ as $n \to \infty$. Indeed, the only possible way the
sequence would not go to infinity is if it became eventually
constant, say after $N$ steps. But this would mean that each of the
vertices in $\child^N(u)$ has the property that all of its descendants
have outdegree one, contradicting the fact that $T$ has no free ends.

It then follows that
$$
\Omega(u,n):= \frac{1}{(\gamma(u,n))^q}  \sum_{v \in \child^n(u)}
\lambda_v = \frac{1}{(\gamma(u,n))^{q-1}}  \to 0
$$
as $n \to \infty$. Applying the previous theorem to the full sequence
of natural numbers, we obtain that $B$ is hypercyclic.
\end{proof}

We now study a necessary condition for hypercyclicity of $B$.

\begin{theorem}\label{the_nec_root}
Let $T=(V,E)$ be a leafless directed tree with a root, let $\lambda=\{
\lambda_v \}_{v \in V}$ be a positive sequence, let $1 \leq q <
\infty$ and assume the backward shift $B$ is bounded on $\Lq$. If $B$
is hypercyclic, then for each $u \in V$ there exists an increasing
sequence of nonnegative integers $\{ n_k \}$ such that
$$
\sum_{v \in \child^{n_k}(u)} \lambda_v^{-1/q} \to \infty \qquad \hbox{ as } k \to \infty.
$$ 
\end{theorem}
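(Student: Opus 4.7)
The plan is to contrapositively exploit the density of the orbit of a hypercyclic vector $g$ by forcing $B^{n}g$ to approximate arbitrarily large multiples $k\chi_u$ of a fixed characteristic function, and then squeezing the pointwise value $(B^{n}g)(u)=\sum_{v\in\child^{n}(u)}g(v)$ between a lower bound coming from the approximation and an upper bound coming from a trivial pointwise estimate on $g$.

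Concretely, fix $u\in V$, let $g$ be a hypercyclic vector for $B$, and recall that the orbit $\{B^{n}g:n\in\N_{0}\}$ meets every nonempty open subset of $\Lq$ infinitely often (a standard consequence of density: otherwise one could cut out the finitely many orbit points and contradict density at an interior point). Apply this to the open balls $\{f\in\Lq:\|f-k\chi_{u}\|_{q}<1\}$ for $k=1,2,\dots$ to extract an increasing sequence $\{n_{k}\}$ with $\|B^{n_{k}}g-k\chi_{u}\|_{q}<1$. Evaluating at the vertex $u$, the elementary bound $|\varphi(u)|\lambda_{u}^{1/q}\le\|\varphi\|_{q}$ yields $|(B^{n_{k}}g)(u)-k|<\lambda_{u}^{-1/q}$, and in particular
$$
|(B^{n_{k}}g)(u)|\ \ge\ k-\lambda_{u}^{-1/q}.
$$

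For the matching upper bound, I would use the essentially trivial pointwise estimate that is built into the definition of the $\Lq$ norm: from $|g(v)|^{q}\lambda_{v}\le\|g\|_{q}^{q}$ one immediately has $|g(v)|\le\|g\|_{q}\lambda_{v}^{-1/q}$ for every $v\in V$. Combining this with the formula $(B^{n}g)(u)=\sum_{v\in\child^{n}(u)}g(v)$ (which follows, as in the proof of Theorem~\ref{th_hyp_root}, from iterating the defining expression of $B$) gives
$$
|(B^{n_{k}}g)(u)|\ \le\ \sum_{v\in\child^{n_{k}}(u)}|g(v)|\ \le\ \|g\|_{q}\sum_{v\in\child^{n_{k}}(u)}\lambda_{v}^{-1/q}.
$$
Putting the two inequalities together yields $\sum_{v\in\child^{n_{k}}(u)}\lambda_{v}^{-1/q}\ge(k-\lambda_{u}^{-1/q})/\|g\|_{q}$, which tends to infinity with $k$, giving the desired conclusion.

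The only step that requires any care is the choice of a strictly increasing sequence $\{n_{k}\}$; this is the point at which one must appeal not merely to density of the orbit but to the sharper fact that a dense orbit hits every nonempty open set infinitely often. Once this is in hand, the rest of the argument is just the juxtaposition of the approximation bound with the pointwise bound $|g(v)|\le\|g\|_{q}\lambda_{v}^{-1/q}$, and no Hölder-type estimate is needed — which is precisely why the exponent $-1/q$ (rather than $-1/(q-1)$) appears in the statement.
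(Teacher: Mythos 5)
Your argument is correct. It proves the theorem by a route that is genuinely, if mildly, different from the paper's: you fix a single hypercyclic vector $g$ once and for all and let the \emph{target} grow, approximating $k\chi_u$ for $k=1,2,\dots$ (using that the orbit of $g$ meets every nonempty open set infinitely often, so the indices $n_k$ can be chosen increasing), whereas the paper keeps the target fixed at $\chi_u$ and instead shrinks the \emph{vector}, choosing at stage $k$ a fresh hypercyclic vector $f$ with $\|f\|_q<\delta_k=(k+\lambda_u^{-1/q})^{-1}$ — which requires the density of the set of hypercyclic vectors, not just the existence of one. After this normalization the two proofs run the same squeeze: the approximation gives a lower bound on $\bigl|(B^{n_k}f)(u)\bigr|=\bigl|\sum_{v\in\child^{n_k}(u)}f(v)\bigr|$, and the trivial pointwise estimate $|f(v)|\le\|f\|_q\lambda_v^{-1/q}$ gives the upper bound $\|f\|_q\sum_{v\in\child^{n_k}(u)}\lambda_v^{-1/q}$, forcing the sum to exceed roughly $k$ (in your version, $(k-\lambda_u^{-1/q})/\|g\|_q$). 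Your version is slightly more economical in its hypotheses (one hypercyclic vector suffices, with the standard fact that $B^Ng$ is again hypercyclic) and avoids the inductive bookkeeping with $\delta$; the paper's version keeps all constants normalized so that the bound $k<\sum_{v\in\child^{n_k}(u)}\lambda_v^{-1/q}$ comes out exactly. The one point you rightly flag — strict monotonicity of $\{n_k\}$ — is handled correctly by the ``infinitely often'' refinement, and the rest of the proposal is complete.
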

\begin{proof}
Let $u \in V$ be fixed. We proceed inductively. Let $k \in \N$ and assume that
for each $j < k$ we have chosen $n_1< n_2 < \dots < n_j$ such that
$$
j< \sum_{v \in \child^{n_{j}}(u)} \lambda_v^{-1/q}
$$
(no assumption is needed if $k=1$).

Define $\ds \delta:= (k+\lambda_u^{-1/q})^{-1}$. Since $B$ is
hypercyclic, we can choose $f$ a hypercyclic vector such that
\begin{equation}\label{ineq1}
\| f \|_q < \delta.
\end{equation}

We  now choose an integer $n_k > n_{k-1}$ (or $n_1 \in \N$, if $k=1$) such that
\begin{equation}\label{ineq2}
\left\| B^{n_k} f - \chi_u \right\|_q < \delta,
\end{equation}
where $\chi_u$ is the characteristic function of $u$.

From inequality \eqref{ineq2}, we get
$$
\Big| \Big(B^{n_k}f - \chi_u \Big)(u) \Big|^q \lambda_u < \delta^q,
$$
which is
$$
\Bigg| \sum_{v \in \child^{n_k}(u)} f(v) -1 \Bigg|^q  \lambda_u < \delta^q,  
$$
and hence
$$
1 - \Bigg| \sum_{v \in \child^{n_k}(u)} f(v) \Bigg|  < \frac{\delta}{ \lambda_u^{1/q}}.
$$
Therefore
$$
1 -  \frac{\delta}{ \lambda_u^{1/q}} <  \sum_{v \in \child^{n_k}(u)} |f(v)|.
$$
By inequality \eqref{ineq1} we have that for each $v \in V$
$$
| f(v)|^q \lambda_v < \delta^q,
$$
and hence combining the last two inequalities we obtain
$$
1 -  \frac{\delta}{ \lambda_u^{1/q}} <  \sum_{v \in \child^{n_k}(u)}
|f(u)|  <  \sum_{v \in \child^{n_k}(u)}
\frac{\delta}{\lambda_v^{1/q}},
$$
which simplifies to
$$
k = \frac{1}{\delta} - \frac{1}{\lambda_u^{1/q}} <  \sum_{v \in
  \child^{n_k}(u)} \frac{1}{\lambda_v^{1/q}},
$$
which is what we wanted. Therefore, we have chosen an increasing
sequence of natural numbers $\{n_k\}$ such that
$$
k < \sum_{v \in \child^{n_k}(u)} \lambda_v^{-1/q}, 
$$
which proves the theorem.
\end{proof}

In the following corollary, we exclude the case $q=1$, given the
comment after Corollary~\ref{cor_boundedB}.

\begin{corollary}\label{cor_nec1}
Let $T=(V,E)$ be a leafless directed tree with a root, let $1 < q <
\infty$, let $\lambda$ be the constant sequence defined by
$\lambda_v=1$ for each $v\in V$, and assume that the backward shift
$B$ is bounded on $\Lq$. If the tree $T$ has a free end then $B$ is
not hypercyclic.
\end{corollary}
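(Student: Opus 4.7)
The approach is to argue by contrapositive, applying the necessary condition from Theorem~\ref{the_nec_root} to a carefully chosen vertex. The definition of a free end guarantees a vertex witnessing a very sparse ``tail'' of the tree, and in the unweighted setting that sparseness prevents the sum in Theorem~\ref{the_nec_root} from growing at all.

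More precisely, since $T$ has a free end, pick a vertex $w \in V$ all of whose descendants have outdegree one. Because $T$ is leafless, $w$ itself has at least one child, so $k := \gamma(w,1) = |\child(w)| \geq 1$. Since every descendant of $w$ has exactly one child, an easy induction on $n$ gives $\gamma(w,n) = k$ for every $n \in \N$: each element of $\child^n(w)$ has exactly one successor in $\child^{n+1}(w)$, and this correspondence is a bijection. (This is the only spot where being leafless is used implicitly, to be sure the iterates $\child^n(w)$ are nonempty.)

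With $\lambda_v = 1$ for all $v \in V$, this yields
$$
\sum_{v \in \child^n(w)} \lambda_v^{-1/q} \;=\; \gamma(w,n) \;=\; k,
$$
for every $n \in \N$. In particular, no subsequence of this constant sequence tends to infinity. By Theorem~\ref{the_nec_root}, if $B$ were hypercyclic, then for every $u \in V$ — and in particular for $u = w$ — one could extract an increasing sequence $\{n_k\}$ along which $\sum_{v \in \child^{n_k}(u)} \lambda_v^{-1/q} \to \infty$. This is incompatible with the bounded behavior at $w$, so $B$ cannot be hypercyclic.

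There is essentially no obstacle here: once one notices that ``free end at $w$'' together with leaflessness forces $\gamma(w,n)$ to be the constant $k$, the result is immediate from the previously established necessary condition. The only point requiring a word of care is checking that all iterated descendant layers $\child^n(w)$ are nonempty and stay at cardinality $k$, which is why the hypothesis that $T$ is leafless is invoked.
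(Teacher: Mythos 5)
Your proof is correct and follows essentially the same route as the paper: apply Theorem~\ref{the_nec_root} at a vertex where the descendant counts are eventually constant, so the sum $\sum_{v \in \child^{n}(u)} \lambda_v^{-1/q}$ cannot tend to infinity along any subsequence. The only cosmetic difference is that the paper applies the theorem at a \emph{descendant} $w^*$ of the free-end vertex (so that $\gamma(w^*,n)=1$), whereas you use the free-end vertex itself and track the constant $k=\gamma(w,1)$; both work equally well.
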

\begin{proof}
If $T$ has a free end, there exists a vertex such that all its
descendants have degree one. Let $w^*$ be one of these descendants. It
is then clear that $\gamma(w^*,n)=1$ for all $n$. But then, for any
sequence $\{n_k\}$ we have
$$
\sum_{v \in \child^{n_k}(w^*)} \lambda_v^{-1/q}=\gamma(w^*,n_k)=1.
$$
The previous theorem then assures that $B$ is not hypercyclic.
\end{proof}

Putting together Corollaries \ref{cor_suf1} and \ref{cor_nec1} we
obtain the following characterization of hypercyclicity of the backward
shift for the unweighted case.

\begin{corollary}
Let $T=(V,E)$ be a leafless directed tree with a root, let $1 < q <
\infty$, let $\lambda$ be the constant sequence defined by
$\lambda_v=1$ for each $v\in V$, and assume that the backward shift
$B$ is bounded on $\Lq$. The operator $B$ is hypercyclic if and only if
the tree $T$ has no free ends.
\end{corollary}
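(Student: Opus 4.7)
The plan is to observe that this corollary is essentially a direct amalgamation of Corollaries~\ref{cor_suf1} and \ref{cor_nec1}, each of which was already established by invoking, respectively, Theorem~\ref{th_hyp_root} (the sufficient Hypercyclicity Criterion condition) and Theorem~\ref{the_nec_root} (the necessary condition on sums of $\lambda_v^{-1/q}$). Since both corollaries are stated under exactly the hypotheses appearing in the statement (leafless rooted tree, unweighted $\Lq$ with $1<q<\infty$, $B$ bounded on $\Lq$), no further technical work is needed; the proof is simply a logical tying-together.

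For the sufficiency direction, I would say: assume $T$ has no free ends. Then Corollary~\ref{cor_suf1} applies directly and yields that $B$ is hypercyclic. Recall that the heart of this direction is the observation that, for each fixed $u\in V$, leaflessness and the absence of free ends together force $\gamma(u,n)\to\infty$, so that in the unweighted case $\Omega(u,n)=(\gamma(u,n))^{1-q}\to 0$, at which point Theorem~\ref{th_hyp_root} supplies a hypercyclic vector via the Hypercyclicity Criterion.

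For the necessity direction, I would argue the contrapositive: if $T$ has a free end, then by Corollary~\ref{cor_nec1}, $B$ is not hypercyclic. The underlying reason, as recorded in Theorem~\ref{the_nec_root}, is that some vertex $w^*$ has $\gamma(w^*,n)=1$ for all $n$, so the sum $\sum_{v\in\child^{n_k}(w^*)}\lambda_v^{-1/q}$ reduces to $1$ for every $n_k$ in the unweighted case, violating the necessary unboundedness condition given by the theorem.

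No new obstacle appears in the corollary itself: the two real theorems are Theorem~\ref{th_hyp_root} and Theorem~\ref{the_nec_root}, and the condition ``no free end'' is precisely the combinatorial bridge that matches their hypotheses in the unweighted setting. Hence the proof I would write would consist of essentially two short sentences explicitly citing the two previous corollaries, with a brief mention that the restriction $1<q<\infty$ is needed to avoid the degenerate $q=1$ behavior flagged after Corollary~\ref{cor_boundedB}.
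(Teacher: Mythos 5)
Your proposal is correct and matches the paper exactly: the paper obtains this corollary by simply putting together Corollaries~\ref{cor_suf1} and \ref{cor_nec1}, which is precisely your argument. The additional recap of why each corollary holds is accurate but not needed.
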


We have not been able to obtain a condition that is both necessary and
sufficient for hypercyclicity of $B$ on $\Lq$ for the case when the
sequence $\lambda$ is not constant. We leave the question open for
future research.

\section{Hypercyclicity of the Backward Shift: Unrooted Directed
  Trees}\label{sec_unroot}

We deal now with the case where the tree $T$ does not have a root. For each $u
\in V$ and $n \in \N$, recall that $\gamma(u,n)$ denotes the
cardinality of the set $\child^n(u)$ and that $\Omega(u,n)$ denotes the number
$$
\Omega(u,n):= \frac{1}{(\gamma(u,n))^q}  \sum_{v \in \child^n(u)} \lambda_v. 
$$
We also define for each $u\in V$ and each $n\in \N$ the number
$$
\Theta(u,n):= \left( \gamma(\parent^n(u),n) \right)^{q-1} \lambda_{\parent^n(u)}.
$$

We first give a necessary condition for hypercyclicity of the backward shift.

\begin{theorem}\label{th:unrooted}
Let $T=(V,E)$ be a leafless directed tree with no root, let
$\lambda=\{ \lambda_v \}_{v \in V}$ be a positive sequence, and let $1
\leq q < \infty$. Assume the backward shift operator $B: \Lq \to \Lq$
is bounded. If there exists an increasing sequence of natural numbers
$\{ n_k \}$ such that, for all $u \in V$ we have
$$
\Theta(u,n_k) \to 0 \quad \hbox{ and } \quad \Omega(u,n_k) \to 0
$$ 
as $k \to \infty$, then the operator $B: \Lq \to \Lq$ is hypercyclic.
\end{theorem}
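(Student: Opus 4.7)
The plan is to invoke the Hypercyclicity Criterion (Theorem \ref{the_hc}) with exactly the same dense set and ``inverse'' maps as in the rooted case (Theorem \ref{th_hyp_root}): take $X := \{ g \in \Lq : g \text{ is finitely supported}\}$, which is dense in $\Lq$, and define
\[
(T_n g)(v) := \frac{1}{\gamma(\parent^n(v),n)}\, g(\parent^n(v)),
\]
which is well-defined everywhere because the tree is unrooted (so $V^n = V$) and leafless (so $\gamma(\parent^n(v),n) \geq 1$).

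Conditions (2) and (3) of the criterion will go through verbatim as in Theorem \ref{th_hyp_root}. The computation already carried out there gives
\[
\| T_n g \|_q^q = \sum_{u \in V} |g(u)|^q \, \Omega(u,n),
\]
which is a \emph{finite} sum because $g$ has finite support, so the hypothesis $\Omega(u,n_k)\to 0$ for every $u$ yields $T_{n_k} g \to 0$. The identity $B^n T_n g = g$ is also proved exactly as before, since the argument only uses that $v \in \child^n(u)$ implies $\parent^n(v)=u$ with $\gamma(u,n)$ such descendants.

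The genuinely new step, and what I expect to be the main obstacle, is verifying condition (1), namely $B^{n_k} g \to 0$ for every $g \in X$. In the rooted case this was immediate: if $g$ is supported on vertices of depth at most $N$, then $B^n g \equiv 0$ for $n > N$, because $\child^n(u)$ is disjoint from the support. In the unrooted case this shortcut fails, since every vertex has ancestors of all orders, and the mass of $g$ can persist forever under iteration of $B$. This is precisely where the hypothesis on $\Theta$ enters. By linearity and the triangle inequality it suffices to handle $g = \chi_v$ for a single vertex $v$. A direct computation from the definition of $B$ gives $B^n \chi_v = \chi_{\parent^n(v)}$, hence
\[
\| B^n \chi_v \|_q^q = \lambda_{\parent^n(v)}.
\]
Since $q \geq 1$ and $v \in \child^n(\parent^n(v))$, we have $\gamma(\parent^n(v),n)^{q-1} \geq 1$, so
\[
\lambda_{\parent^n(v)} \leq \gamma(\parent^n(v),n)^{q-1}\, \lambda_{\parent^n(v)} = \Theta(v,n),
\]
and the assumption $\Theta(v,n_k) \to 0$ forces $\|B^{n_k}\chi_v\|_q \to 0$. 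Extending by linearity to arbitrary finitely supported $g$ completes the verification of (1), and the Hypercyclicity Criterion then delivers the hypercyclicity of $B$.
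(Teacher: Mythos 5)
Your proposal is correct, and its overall skeleton is exactly the paper's: the Hypercyclicity Criterion applied to the dense set $X$ of finitely supported functions with the same maps $T_n$, with conditions (2) and (3) imported verbatim from the rooted case. The one place you diverge is the verification of condition (1). The paper bounds $\| B^n g\|_q^q \leq \sum_{w\in V} \Theta(w,n)\,|g(w)|^q$ via Jensen's inequality, exactly as in the boundedness proof for $B$; you instead observe the exact identity $B^n \chi_v = \chi_{\parent^n(v)}$, so that $\| B^n \chi_v\|_q^q = \lambda_{\parent^n(v)}$, and then extend to all of $X$ by the triangle inequality over the (finite) support. Both arguments are valid, but yours is more elementary and in fact slightly sharper: since $\gamma(\parent^n(v),n)\geq 1$, your estimate only uses the consequence $\lambda_{\parent^{n_k}(v)}\to 0$ of the hypothesis $\Theta(v,n_k)\to 0$, and indeed shows that condition (1) of the criterion is \emph{equivalent} to $\lambda_{\parent^{n_k}(v)}\to 0$ for every $v$. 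So your route would let one state the theorem with this weaker hypothesis in place of $\Theta(u,n_k)\to 0$, whereas the paper's Jensen-based bound naturally produces the quantity $\Theta$. (The full $\Theta$ hypothesis is what makes the statement reduce cleanly to Salas' conditions on $\Z$, which is presumably why the paper keeps it.)
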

\begin{proof}
As done in the proof of Theorem~\ref{th_hyp_root}, we apply the
Hypercyclicity Criterion (Theorem \ref{the_hc}) to $B$. Again, $X$
denotes the set $X:=\{ g \in \Lq \, : \, g \hbox{ is finitely
  supported } \}$, which is dense in $\Lq$. Also, recall that for
every $g \in \Lq$ and every $u \in V$ we have
$$
(B^n g)(u) = \sum_{v \in \child^n(u)} g(v).
$$

\begin{enumerate}
\item Let $g \in X$. For every $u \in V$ we have
$$
|(B^n g)(u)|^q  \leq \Bigg(  \sum_{v \in \child^n(u)} |g(v)| \Bigg)^q
$$
As before, by Jensen's inequality, we have
$$
\Bigg( \sum_{v\in \child^n(u)} |g(v)|  \Bigg)^q
\leq
\Big(\gamma(u,n) \Big)^{q-1} \Bigg( \sum_{v\in \child^n(u)} |g(v)|^q \Bigg),
$$
and hence
$$
|(B^n g)(u)|^q \leq \sum_{v\in \child^n(u)}  (\gamma(u,n))^{q-1}  |g(v)|^q.
$$
Multiplying by $\lambda_u$ and summing over all $u \in V$, we obtain
\begin{eqnarray*}
\| B^n g \|_q^q
&=& \sum_{u \in V} |(B^n g)(u)|^q \lambda_u \\
&\leq& \sum_{u \in V} \Bigg( \sum_{v\in \child^n(u)} (\gamma(u,n))^{q-1} \lambda_u  |g(v)|^q \Bigg) \\
&=& \sum_{w \in V} \left( \gamma(\parent^n(w),n) \right)^{q-1} \lambda_{\parent^n(w)}  |g(w)|^q\\
&=& \sum_{w \in V} \Theta(w,n)  |g(w)|^q.
\end{eqnarray*}
Since $g \in X$, this sum is finite. Evaluating at the sequence $n_k$
and taking the limit as $k \to \infty$, we obtain
$$
B^{n_k} g  \to 0.
$$
This shows that the first condition of the Hypercyclicity Criterion
(Theorem~\ref{the_hc}) is satisfied.
\item Now, given $g \in X$ and $n \in \N$, we define the
  complex-valued function $T_n g$ as
$$
(T_n g)(v):= \frac{1}{\gamma(\parent^n(v),n)} g (\parent^n(v)),
$$
for $v \in V$. The rest of the proof of this part is the same as that
of Part (2) of Theorem~\ref{th_hyp_root}.
\item This is the same as Part (3) of Theorem~\ref{th_hyp_root}.
\end{enumerate}
Since all the conditions of the Hypercyclicity Criterion are
satisfied, the operator $B$ is hypercyclic.
\end{proof}

We show an example of an unrooted tree where hypercyclicity occurs.

\begin{example}
Let $1\leq q < \infty$, let  $r \in \N$ and let $s \in \R$ with $s >r^{q-1}$. Let $T$ be the
infinite unrooted leafless directed tree such that each vertex has
outdegree $r$. Observe that, for each $v \in V$ and $n \in \N$, we have
$\gamma(v,n)=r^n$.

Select an arbitrary fixed vertex and call it $w^*$.  Consider the set
$H$ defined as the set if all vertices that share a common
$n$-ancestor with $w^*$, that is
$$
H:= \{  w \in V \, : \, w \in \child^n(\parent^n(w^*)) \hbox{ for some } n \in \N \}.
$$
For each $u \in V$, set $\ds \lambda_u=\frac{1}{s^d}$, where
$d=\dist(u,H)$. Then the operator $B: \Lq \to \Lq$ is hypercyclic.
\end{example}
\begin{proof}
Fix $u \in V$ with $\ds \lambda_u=\frac{1}{s^d}$ for some $d \in \N_0$. Also, let $n>d$.

We have  that 
$$
\lambda_{\parent^n(u)}=\frac{1}{s^{n \pm d}},
$$ 
where the plus sign corresponds to the case where $u$ has a descendant
in $H$ and the minus sign corresponds to the case where $u$ has an
ancestor in $H$. Then
$$
\Theta(w,n)=(\gamma(\parent^n(w),n))^{q-1} \lambda_{\parent^n(w)} =
r^{n(q-1)} \frac{1}{s^{n\pm d}} =\left(\frac{r^{q-1}}{s}\right)^n
\frac{1}{s^{\pm d}}
$$
which goes to zero as $n$ goes to infinity.

Also, for each $v \in \child^n(u)$ we have
$$
\lambda_v=\frac{1}{s^{n \pm d}}
$$ 
where the plus sign corresponds to the case where $u$ has an ancestor
in $H$ and the minus sign corresponds to the case where $u$ has a
descendant in $H$. Then
$$
\Omega(u,n)=\frac{1}{(\gamma(u,n))^q} \sum_{v \in \child^n(u)}
\lambda_v = \frac{1}{r^{nq}} \sum_{v \in \child^n(u)} \frac{1}{s^{n
    \pm d}} =\frac{1}{r^{nq}} r^n \frac{1}{s^{n \pm d}} =
\frac{1}{(r^{q-1} s)^n} \frac{1}{s^{\pm d}}
$$
which goes to zero as $n$ goes to infinity, since $r^{q-1} s >1$. Hence, by the previous
theorem applied to the full sequence $n_k=k$, the operator $B: \Lq \to
\Lq$ is hypercyclic.
\end{proof}

It is easy to see that the conditions in the hypothesis of
Theorem~\ref{th:unrooted} reduce to the conditions given by Salas~\cite{salas}, if
$T$ is the rootless tree $\ds \left(\Z,\{(n,n+1) \, : \, n \in
  \Z\}\right)$. In this case, Salas' work shows the conditions are
necessary and sufficient. 

It is not hard to obtain a necessary condition for hypercyclicity in
the general case for the case of the unrooted tree, in the style of
Theorem~\ref{the_nec_root}. Nevertheless, we could not find any other
family of trees $T$ or weights $\lambda$ where such a condition was also sufficient (except,
of course, the case of the trees already described by Salas). Is there a
necessary and sufficient condition for hypercyclicity in this case? Or
for an ``interesting'' family? We leave the question open for future research.

\section{Relation between $B$ and $S^*$}

We now show that the operators $B$ and $S^*$ are indeed unitarily
equivalent. We can then translate the sufficient condition for
hypercyclicity we found for $B$ to the operator $S^*$. First, we need
a lemma.

\begin{lemma}\label{lem_isom}
Let $T=(V,E)$ be a directed tree and let $1\leq q<\infty$. Let
$\lambda=\{\lambda_u\}_{u\in V}$ and $\mu=\{\mu_u\}_{u\in V}$ be
sequences of positive numbers such that $\mu_u \lambda_u^{q-1}=1$ for
every $u \in U$. Define the operator $\Phi: \Lqmu \to \Lq$ as 
$$
(\Phi f)(u):=\frac{f(u)}{\lambda_u}.
$$ 
Then $\Phi$ is an isometric
isomorphism between $\Lqmu$ and $\Lq$.
\end{lemma}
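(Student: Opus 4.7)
The plan is to verify three things: that $\Phi$ is well-defined and linear, that it preserves norms (which gives injectivity automatically), and that it is surjective. Linearity is immediate from the definition since division by the fixed positive number $\lambda_u$ pointwise is a linear operation.

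First I would compute the norm directly. For $f \in \Lqmu$,
\begin{equation*}
\|\Phi f\|_{\Lq}^q = \sum_{u \in V} \left|\frac{f(u)}{\lambda_u}\right|^q \lambda_u = \sum_{u \in V} |f(u)|^q \lambda_u^{1-q} = \sum_{u \in V} |f(u)|^q \mu_u = \|f\|_{\Lqmu}^q,
\end{equation*}
where the penultimate equality uses the hypothesis $\mu_u \lambda_u^{q-1} = 1$, i.e., $\lambda_u^{1-q} = \mu_u$. This shows $\Phi$ maps $\Lqmu$ into $\Lq$ isometrically, which in particular yields injectivity.

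For surjectivity, given $g \in \Lq$, I would define $f: V \to \C$ by $f(u) := \lambda_u g(u)$. Then formally $\Phi f = g$, and a symmetric computation shows
\begin{equation*}
\|f\|_{\Lqmu}^q = \sum_{u \in V} |\lambda_u g(u)|^q \mu_u = \sum_{u \in V} |g(u)|^q \lambda_u^{q} \lambda_u^{1-q} = \sum_{u \in V} |g(u)|^q \lambda_u = \|g\|_{\Lq}^q < \infty,
\end{equation*}
so $f \in \Lqmu$ and the map $\Phi$ is onto.

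There is essentially no obstacle here: the identity $\mu_u \lambda_u^{q-1}=1$ is precisely what is needed to transform the weight $\lambda_u$ appearing inside the sum after dividing by $\lambda_u^q$ into the weight $\mu_u$, and the calculation is purely algebraic. The only mild point to mention is that the condition should read ``for every $u \in V$'' rather than ``$u \in U$,'' which appears to be a typo in the statement.
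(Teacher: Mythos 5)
Your proof is correct and follows essentially the same route as the paper: the same direct norm computation using $\mu_u=\lambda_u^{1-q}$ for the isometry, and the same explicit inverse $f(u):=\lambda_u g(u)$ for surjectivity (you are also right that ``$u\in U$'' is a typo for ``$u\in V$'').
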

\begin{proof}
Clearly $\Phi$ is linear. To see that $\Phi$ is isometric, let $f \in \Lqmu$. Then
$$
\| f \|_q^q = \sum_{u \in V} | f(u)|^q \mu_u,
$$
and
$$
\| \Phi f \|_q^q = \sum_{u \in V} | (\Phi f)(u)|^q \lambda_u = \sum_{u
  \in V} \frac{|f(u)|^q}{\lambda_u^q} \lambda_u =  \sum_{u \in V}
\frac{|f(u)|^q}{\lambda_u^{q-1}}.
$$
Since $\mu_u = \frac{1}{\lambda_u^{q-1}}$, it follows that $\| \Phi f
\|_q= \| f \|_q$. We now show $\Phi$ is surjective. Let $g \in
\Lq$. Define $f(u):=\lambda_u g(u)$. We show that $f \in
\Lqmu$. Indeed, the hypothesis implies that $\lambda_u^q \mu_u =
\lambda_u$ and hence we have
$$
\| f \|_q^q = \sum_{u \in V} | f(u)|^q \mu_u = \sum_{u \in V}
\lambda_u^q |g(u)|^q \mu_u =  \sum_{u \in V}  |g(u)|^q \lambda_u <
\infty,
$$
since $g \in \Lq$. Clearly, $\Phi f = g$, which concludes the proof.
\end{proof}

As we mentioned before, $S^*$ and $B$ turn out to be unitarily
equivalent. Recall that in Section~\ref{sec_adjoint} we defined $S^*$,
even in the case $q=1$.

\begin{theorem}\label{the_unit_equiv}
Let $1\leq  q<\infty$ and let $T=(V,E)$ be a directed tree. Let
$\lambda=\{\lambda_u\}_{u\in V}$ and $\mu=\{\mu_u\}_{u\in V}$ be
sequences of positive numbers such that $\mu_u \lambda_u^{q-1}=1$ for
every $u \in U$. Then $S^*: \Lq \to \Lq$ is unitarily equivalent to
$B: \Lqmu \to \Lqmu$.
\end{theorem}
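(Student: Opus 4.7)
The plan is to build the unitary equivalence directly out of the isometric isomorphism $\Phi$ supplied by Lemma~\ref{lem_isom} and then verify the intertwining relation $S^* \Phi = \Phi B$ by a one-line computation. Since $\Phi:\Lqmu \to \Lq$ is an isometric isomorphism (hence ``unitary'' in the Banach-space sense appropriate for $\Lp$-spaces), exhibiting this intertwining is the same as showing unitary equivalence.

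First I would fix $f\in\Lqmu$ and $u\in V$ and compute both sides. On one hand, by the definition of $B$ and $\Phi$,
\[
(\Phi B f)(u) \;=\; \frac{(Bf)(u)}{\lambda_u} \;=\; \frac{1}{\lambda_u}\sum_{v\in\child(u)} f(v).
\]
On the other hand, using Proposition~\ref{prop:adjoint} (or the explicit formula defining $S^*$ on $\Lone$ when $q=1$) and the fact that $(\Phi f)(v)=f(v)/\lambda_v$,
\[
(S^*\Phi f)(u) \;=\; \sum_{v\in\child(u)} (\Phi f)(v)\,\frac{\lambda_v}{\lambda_u}
\;=\; \sum_{v\in\child(u)} \frac{f(v)}{\lambda_v}\cdot\frac{\lambda_v}{\lambda_u}
\;=\; \frac{1}{\lambda_u}\sum_{v\in\child(u)} f(v).
\]
The two expressions agree, so $\Phi B = S^*\Phi$, i.e.\ $B=\Phi^{-1}S^*\Phi$.

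Once this identity is established, the rest is formal: Lemma~\ref{lem_isom} guarantees that $\Phi$ is an isometric isomorphism from $\Lqmu$ onto $\Lq$, so $B$ and $S^*$ are unitarily equivalent on the respective spaces. I would note briefly that the relation $\mu_u\lambda_u^{q-1}=1$ is exactly what makes the weight $\lambda_v$ appearing in $S^*$ cancel against the factor $1/\lambda_v$ coming from $\Phi$, leaving only the $1/\lambda_u$ from $\Phi^{-1}$ applied at the output; this is the algebraic reason the two operators are conjugate.

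There is essentially no obstacle here: the computation is a cancellation, and Lemma~\ref{lem_isom} has already done the hard work of producing the isometry. The only mild subtlety is that the statement covers $q=1$, but since $S^*$ on $\Lone$ was defined in Section~\ref{sec_adjoint} by the same pointwise formula used above, the verification goes through without change.
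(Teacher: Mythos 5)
Your proposal is correct and follows essentially the same route as the paper: take the $\Phi$ from Lemma~\ref{lem_isom}, verify the intertwining identity $S^*\Phi=\Phi B$ by the same pointwise cancellation of $\lambda_v$, and conclude from the fact that $\Phi$ is an isometric isomorphism. Your added remark about the $q=1$ case and the role of the relation $\mu_u\lambda_u^{q-1}=1$ is consistent with the paper's setup and adds nothing that conflicts with it.
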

\begin{proof}
Let $\Phi$ be as in Lemma~\ref{lem_isom}. We will show that $S^* \Phi
= \Phi B$.

Let $f \in \Lqmu$. For each $u \in V$ we have
$$
(\Phi B f)(u)= \frac{1}{\lambda_u} (Bf)(u) = \frac{1}{\lambda_u}
\sum_{v\in \child(u)} f(v),
$$
and, Proposition~\ref{prop:adjoint} gives
$$
(S^* \Phi f)(u) = \sum_{v \in \child(u)} (\Phi f)(v)
\frac{\lambda_v}{\lambda_u} = \sum_{v \in \child(u)}
\frac{f(v)}{\lambda_v} \frac{\lambda_v}{\lambda_u} = \sum_{v \in
  \child(u)} f(v)\frac{1}{\lambda_u}.
$$
Hence $S^* \Phi = \Phi B$, as desired. Since $\Phi$ is an isometric
isomorphism, the result follows.
\end{proof}

We can use the previous theorem to give a sufficient condition for the
hypercyclicity of $S^*$ in the case of the rooted tree.

\begin{theorem}
Let $T=(V,E)$ be a leafless directed tree with a root, let $\lambda=\{
\lambda_v \}_{v \in V}$ be a positive sequence, and let $1 < q <
\infty$. If there exists an increasing sequence of natural numbers $\{
n_k \}$ such that, for all $u \in V$ we have
$$
\frac{1}{(\gamma(u,n_k))^q}  \sum_{v \in \child^{n_k}(u)} \lambda_v^{1-q} \to 0
$$ 
as $k \to \infty$, then $S^*: \Lq \to \Lq$ is hypercyclic.
\end{theorem}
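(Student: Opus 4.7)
The plan is to deduce this from the unitary equivalence established in Theorem~\ref{the_unit_equiv} together with the sufficient condition for hypercyclicity of $B$ on a rooted tree (Theorem~\ref{th_hyp_root}). The key observation is that the exponent $1-q$ appearing in the hypothesis is exactly what one obtains by substituting $\mu_v = \lambda_v^{1-q}$ into the quantity $\Omega(u,n)$ from Theorem~\ref{th_hyp_root}.

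Concretely, I would set $\mu_u := \lambda_u^{1-q}$ for every $u\in V$. This sequence is positive and satisfies the normalization $\mu_u \lambda_u^{q-1} = 1$ required by Lemma~\ref{lem_isom} and Theorem~\ref{the_unit_equiv}, so the latter provides an isometric isomorphism $\Phi: \Lqmu \to \Lq$ conjugating $B:\Lqmu \to \Lqmu$ to $S^*:\Lq \to \Lq$. Since hypercyclicity is invariant under isometric (in fact, topological) conjugation, it suffices to prove that $B$ is hypercyclic on $\Lqmu$.

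To apply Theorem~\ref{th_hyp_root} to $B$ on $\Lqmu$, I would compute the relevant quantity with weights $\mu$ instead of $\lambda$:
$$
\Omega_\mu(u,n) = \frac{1}{(\gamma(u,n))^q} \sum_{v \in \child^n(u)} \mu_v = \frac{1}{(\gamma(u,n))^q} \sum_{v \in \child^n(u)} \lambda_v^{1-q},
$$
which is precisely the expression appearing in the hypothesis. Hence the hypothesis says exactly that $\Omega_\mu(u,n_k) \to 0$ for every $u \in V$, and Theorem~\ref{th_hyp_root} yields hypercyclicity of $B$ on $\Lqmu$, whence hypercyclicity of $S^*$ on $\Lq$ through $\Phi$.

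The only mild subtlety is verifying the boundedness hypothesis of Theorem~\ref{th_hyp_root}: one needs $B$ bounded on $\Lqmu$, which is implicit in the statement since $S^*$ must be a bounded operator on $\Lq$ for the question of hypercyclicity to make sense; the unitary equivalence $\Phi B \Phi^{-1} = S^*$ immediately transfers boundedness from $S^*$ to $B$. No calculation is really needed beyond this substitution, so I do not anticipate any serious obstacle — the work has already been done in Theorems~\ref{th_hyp_root} and~\ref{the_unit_equiv}, and this theorem is essentially their composition.
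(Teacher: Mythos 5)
Your proposal is correct and follows exactly the paper's own argument: set $\mu_u=\lambda_u^{1-q}$, apply Theorem~\ref{th_hyp_root} to $B$ on ${\mathbf L}^q(T,\mu)$, and transfer hypercyclicity to $S^*$ via the unitary equivalence of Theorem~\ref{the_unit_equiv}. Your remark about transferring boundedness through the conjugation is a small point the paper leaves implicit, but otherwise the two proofs coincide.
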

\begin{proof}
For each $u \in V$, define $\mu_u$ as $\mu_u=\lambda_u^{1-q}$. By
Theorem~\ref{th_hyp_root}, we have that $B: \Lqmu \to \Lqmu$ is
hypercyclic. But since $\mu_u \lambda_u^{q-1}=1$ for each $u \in V$,
Theorem~\ref{the_unit_equiv} implies that the operator $S^*$ is
unitarily equivalent to $B$. Hence $S^*$ is hypercyclic.
\end{proof}

Similar results can be obtained for $S^*$ in the case of the unrooted
tree and for necessary conditions for hypercyclicity in the case
of the rooted tree. We leave them as an exercise for the interested reader.

\end{document}